\newtheorem{theorem}{Theorem}[section]
\newtheorem{lemma}[theorem]{Lemma}
\newtheorem{definition}[theorem]{Definition}
\newtheorem{problem}[theorem]{Problem}
\newtheorem{conjecture}[theorem]{Conjecture}
\newtheorem{claim}{Claim}
\newtheorem*{claim*}{Claim}
\newtheorem{example}[theorem]{Example}
\title{\bf \Large }
\date{ }
\title{\bf \Large 
	Two results on set families: Sturdiness and intersection\footnote{This paper was published on Journal of Combinatorial Theory, Series A 220 (2026) 106153. E-mail addresses: \url{fenglh@163.com} (L. Feng, corresponding author),
    \url{ytli0921@hnu.edu.cn} (Y. Li, corresponding author).
    }}
\author{
{\small  Yongjiang Wu$^1$, \ \  Zhiyi Liu$^1$,  \ \ Lihua Feng$^1$, \ \ Yongtao Li$^2$}\\[2mm]
\small $^1$School of Mathematics and Statistics, HNP-LAMA, Central South University\\
 \small Changsha, Hunan, 410083, China\\ 
 \small $^2$Yau Mathematical Sciences Center, Tsinghua University, Beijing, 100084, China\\
}
\begin{document}
\maketitle
\begin{abstract}
This paper resolves two open problems in extremal set theory. For a family $\mathcal{F} \subseteq 2^{[n]}$ and $i, j\in [n]$, we denote $\mathcal{F} (i,\bar{j})=\{F\backslash\{i\}:  F\in \mathcal{F}, F\cap\{i,j\}=\{i\}\}$. The sturdiness $\beta (\mathcal{F})$ is defined as the minimum $|\mathcal{F} (i,\bar{j})|$ over all $i\neq j$. A family $\mathcal{F}$ is called an IU-family if it satisfies the intersection constraint: $F\cap F'\neq \emptyset $ for all $F,F'\in \mathcal{F}$, as well as the union constraint: $F\cup F' \neq [n]$ for all $F,F'\in \mathcal{F}$. The well-known IU-Theorem states that every IU-family $\mathcal{F}\subseteq 2^{[n]}$ has size at most $ 2^{n-2}$. 
In this paper, we prove that 
if $\mathcal{F}\subseteq 2^{[n]}$ is an IU-family, then $\beta (\mathcal{F})\le 2^{n-4}$. 
This confirms a recent conjecture proposed by Frankl and Wang. 

As the second result, we establish a tight upper bound on the sum of sizes of cross $t$-intersecting separated families. Our result not only extends a previous theorem  of Frankl, Liu, Wang and Yang on separated families, 
but also provides explicit counterexamples to an open problem proposed by them, thereby settling their problem in the negative.\\

\noindent {\it AMS Classification}:  05C65; 05D05\\[1mm]
\noindent {\it Keywords}:  Sturdiness; IU-families; Separated families; cross $t$-intersecting
\end{abstract}

\section{Introduction}
This paper addresses two open problems in extremal set theory concerning the structural stability and intersection properties of set systems. We begin by establishing the necessary notation and foundational concepts that will be used throughout our analysis.

 For integers $m\leq n$, let  $[m, n]=\{m, m+1, \ldots, n\}$ and simply let $[n]=[1,n]$. Let $2^{[n]}$ denote the family of all subsets of $[n]$, and let  $\binom{[n]}{k}$ denote the family of all $k$-element subsets of $[n]$.  
 We denote $\binom{[n]}{\leq k}=\{F\subseteq [n]: |F|\leq k\}$. A family $\mathcal{F}\subseteq 2^{[n]}$  is called $k$-\textit{uniform} if $\mathcal{F}\subseteq\binom{[n]}{k}$, otherwise it is \textit{non-uniform}. 
 A family $\mathcal{F} \subseteq 2^{[n]}$ is called $t$-\textit{intersecting} if $|F\cap F^{\prime}|\geq  t$
 for all $F, F^{\prime}\in \mathcal{F}$. 
Two families $ \mathcal{F}, \mathcal{G} \subseteq 2^{[n]} $ are called  \textit{cross $t$-intersecting} if $|F\cap G|\geq  t$
 for all $F\in \mathcal{F}$ and $G\in \mathcal{G}$.
For the case $t=1$, we abbreviate $1$-intersecting and cross $1$-intersecting as intersecting and cross intersecting, respectively.
A family $\mathcal{F} \subseteq 2^{[n]}$ is called $s$-\textit{union} if $|F\cup F^{\prime}|\leq  s$
 for all $F, F^{\prime}\in \mathcal{F}$. 
For a family $\mathcal{F}\subseteq 2^{[n]}$ and $i\in [n]$, we denote 
$\mathcal{F} (i)=\{F\backslash\{i\}: i\in F\in \mathcal{F}\}$ and  $\mathcal{F} (\bar{i})=\{F: i\notin F\in \mathcal{F}\}$.

In extremal set theory, the structural analysis of set families often relies on two key parameters: diversity and sturdiness. For a family $\mathcal{F} \subseteq 2^{[n]}$, its \textit{diversity} is defined as 
\[ \gamma(\mathcal{F}) = \min_{i \in [n]} |\mathcal{F}(\bar{i})|, \] 
which measures the minimum number of sets avoiding any given element. Equivalently, we see that $\gamma (\mathcal{F})=|\mathcal{F}|-\Delta(\mathcal{F})$, where $ \Delta(\mathcal{F})=\max_{i \in [n]} |\mathcal{F}(i)|$ denotes the \textit{maximum degree} of 
$\mathcal{F}$. 
Diversity has become a very important tool for the study of intersecting families. This parameter helps to distinguish whether a family is close to a star (where diversity equals zero). 
Answering a question of Katona who asks: what is the maximum possible diversity of an $k$-unform intersecting family $\mathcal{F}$ of sets of $[n]$? Lemons and Palmer \cite{LP2008} 
showed that 
\[ \gamma (\mathcal{F}) \le {n-3 \choose k-2} \] 
for every $n\ge 6k^3$. Subsequently, 
this bound on $n$  was improved to $n\ge 6k^2$ by Frankl \cite{F17}, to $n\ge 72k$ by Frankl \cite{F20}, and to $n\ge 36k$ by Frankl and Wang \cite{FW2024}. The best possible bound on $n$ remains unknown for this problem. 
On the other hand, some examples provided by Huang \cite{Huang2019} and Kupavskii \cite{Kup2018} showed that for $n< (2+ \sqrt{3})k$, there exists a $k$-uniform  intersecting family $\mathcal{F}$ with diversity $\gamma (\mathcal{F}) > {n-3 \choose k-2}$. 
For non-uniform families, Frankl and Kupavskii \cite{FK2021} proved that the largest diversity of an intersecting family $\mathcal{F}\subseteq 2^{[n]}$ satisfies $\gamma (\mathcal{F}) = \big(1- \Theta (\frac{\log n}{n}) \big) 2^{n-2}$.  
We refer to \cite{FK2021} for several results for $t$-intersecting families and \cite{FW2022, FW2025, Pat2025} for related variants.

\subsection{The sturdiness of IU-families}

Building on the concept of diversity, Frankl and Wang \cite{FW-stur} recently introduced the notion of sturdiness. To start with, 
we denote $ \mathcal{F} (i,\bar{j})=\{F\backslash\{i\}:  F\in \mathcal{F}, F\cap\{i,j\}=\{i\}\}$. 
The \textit{sturdiness} $\beta(\mathcal{F})$ is  defined as 
$$\beta(\mathcal{F}) = \min_{1 \leq i \neq j \leq n} |\mathcal{F}(i,\bar{j})|.$$ 
 This parameter provides a more refined measure of a family's local robustness. Using diversity as one of their main tools, Frankl and Wang \cite{FW-stur} proved that if $n \geq 36(k+6)$ and $\mathcal{F} \subseteq \binom{[n]}{k}$ is an intersecting family, then 
 \[ \beta(\mathcal{F}) \leq \binom{n-4}{k-3}. \] 
 Beyond intersecting families, Frankl and Wang also investigated the maximum sturdiness for $t$-intersecting families, exploring how intersection constraints impact local robustness.
More recently,  Patk\'{o}s \cite{Pat2025} investigated a generalization of sturdiness through the concept of $(p,q)$-d\H{o}m\H{o}d\H{o}m, which encompasses several parameters including diversity, minimum degree and sturdiness as special cases.

 In  \cite{FW-stur},
 Frankl and Wang proposed several conjectures about the maximum sturdiness, including an interesting conjecture for the \textit{IU-family} $\mathcal{F}$, which satisfies 
 \begin{equation} \label{eq-cap}
     F\cap F' \neq \emptyset ~~\text{for all $F,F'\in \mathcal{F}$}, 
 \end{equation}
 and 
 \begin{equation}
     \label{eq-cup}
     F\cup F' \neq [n] ~~\text{for all $F,F'\in \mathcal{F}$}.
 \end{equation}
 We have so far encountered problems related to both intersections and unions. As noted previously, an intersection problem is essentially equivalent to the corresponding union problem when applied to the family of complements. However, when both intersection and union conditions are imposed on the same family, new and distinct types of problems arise. Let us introduce the first non-trivial case of this kind. 

 \begin{theorem}[IU-Theorem]
 If $\mathcal{G}\subseteq 2^{[n]}$ is an IU-family, i.e., satisfying (\ref{eq-cap}) and (\ref{eq-cup}), then
\[ |\mathcal{F}| \le 2^{n-2}.  \] 
\end{theorem}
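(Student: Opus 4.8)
The plan is to sandwich $\mathcal{F}$ between an intersecting up-set and a ``union-free'' down-set and then apply Kleitman's correlation inequality. (We may assume $n\ge 2$, the smaller cases being trivial.) First I would record the elementary observation that the two defining constraints are monotone in complementary directions: if $G\cap G'\neq\emptyset$ then $G_1\cap G_1'\supseteq G\cap G'\neq\emptyset$ for all $G_1\supseteq G$ and $G_1'\supseteq G'$, while if $G\cup G'\neq[n]$ then $H\cup H'\subseteq G\cup G'\neq[n]$ for all $H\subseteq G$ and $H'\subseteq G'$. In particular, writing $\mathcal{F}^{c}=\{[n]\setminus F:F\in\mathcal{F}\}$, condition~(\ref{eq-cup}) says precisely that $\mathcal{F}^{c}$ is intersecting, which is the union analogue of~(\ref{eq-cap}).

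Next I would pass to closures. Let $\mathcal{U}=\{G\subseteq[n]:F\subseteq G \text{ for some }F\in\mathcal{F}\}$ be the up-closure of $\mathcal{F}$, and let $\mathcal{D}=\{H\subseteq[n]:H\subseteq F \text{ for some }F\in\mathcal{F}\}$ be its down-closure; clearly $\mathcal{F}\subseteq\mathcal{U}\cap\mathcal{D}$. By the monotonicity above together with~(\ref{eq-cap}), the up-set $\mathcal{U}$ is intersecting, so the standard complementary-pair argument gives $|\mathcal{U}|\le 2^{n-1}$. Dually, by monotonicity together with~(\ref{eq-cup}), every two members of the down-set $\mathcal{D}$ have union different from $[n]$; equivalently $\mathcal{D}^{c}$ is an intersecting family, so $|\mathcal{D}|=|\mathcal{D}^{c}|\le 2^{n-1}$.

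Finally I would invoke Kleitman's lemma (a special case of the Harris--FKG inequality): for an up-set $\mathcal{U}$ and a down-set $\mathcal{D}$ in $2^{[n]}$, one has $2^{n}\,|\mathcal{U}\cap\mathcal{D}|\le|\mathcal{U}|\cdot|\mathcal{D}|$. Combining this with the two bounds above,
\[
|\mathcal{F}|\;\le\;|\mathcal{U}\cap\mathcal{D}|\;\le\;\frac{|\mathcal{U}|\cdot|\mathcal{D}|}{2^{n}}\;\le\;\frac{2^{n-1}\cdot 2^{n-1}}{2^{n}}\;=\;2^{n-2},
\]
which is the claim. The conceptual content lies entirely in choosing to pass to the up- and down-closures and in recognising Kleitman's inequality as the right tool; the one step that genuinely has to be verified is that the hypotheses~(\ref{eq-cap}) and~(\ref{eq-cup}) really are inherited by $\mathcal{U}$ and $\mathcal{D}$ respectively, which is the monotonicity recorded in the first step. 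As an alternative to invoking Kleitman's lemma, one can instead induct on $n$: splitting $\mathcal{F}$ according to whether a fixed element belongs to a set yields families $\mathcal{A},\mathcal{B}\subseteq 2^{[n-1]}$ that are cross-intersecting and cross-union-free with $\mathcal{B}$ intersecting and $\mathcal{A}$ union-free, and a short argument then bounds $|\mathcal{A}|+|\mathcal{B}|\le 2^{n-2}$; the closure-plus-Kleitman route above is shorter.
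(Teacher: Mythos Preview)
Your argument is correct. The paper does not include its own proof of the IU-Theorem; it merely cites the result and points the reader to \cite{FT2016} for ``an elegant proof by using the Kleitman correlation inequality.'' Your write-up is exactly that proof: pass to the up-closure $\mathcal{U}$ and down-closure $\mathcal{D}$, observe that the intersection and union constraints are inherited by $\mathcal{U}$ and $\mathcal{D}$ respectively (giving $|\mathcal{U}|,|\mathcal{D}|\le 2^{n-1}$), and then apply Kleitman's lemma $2^{n}|\mathcal{U}\cap\mathcal{D}|\le|\mathcal{U}|\,|\mathcal{D}|$. So you have reproduced precisely the approach the paper defers to, and every step checks out.
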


 This result was independently studied by several authors, including Kleitman \cite{Kle1966}, 
 Marica and Sch\"{o}nheim \cite{MS1969}, 
 Daykin and Lov\'{a}sz \cite{DL1975}, 
 Seymour \cite{Sey1973}, 
 Anderson \cite{And1976}, 
 and Frankl \cite{Fra1975}. 
 We refer to \cite[Theorem 6.3]{FT2016} for an elegant proof by using the Kleitman correlation inequality. 

 While the maximum size of IU-families is well understood, recent work has focused on finer structural properties such as their local robustness. This leads to the following conjecture \cite{FW-stur} regarding the sturdiness of IU-families.
 
\begin{conjecture}[Frankl and Wang \cite{FW-stur}] \label{cj1} 
Let $\mathcal{G}\subseteq 2^{[n]}$ be an IU-family. Then
$$
\beta(\mathcal{G})\leq 2^{n-4}.
$$
\end{conjecture}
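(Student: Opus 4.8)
\section{Proof proposal}

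The plan is to fix one pair $\{i,j\}$, reduce the sturdiness of $\mathcal F$ at that pair to an inequality between two \emph{free} set families on the remaining $n-2$ points, and then invoke the extremal theory of cross-Sperner families.

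Fix $i\neq j$ and put $N=[n]\setminus\{i,j\}$, so $|N|=n-2$. If $\mathcal F(i,\bar j)=\emptyset$ or $\mathcal F(j,\bar i)=\emptyset$ then $\beta(\mathcal F)=0$ and there is nothing to prove, so assume both are nonempty. Identify $\mathcal F(i,\bar j)$ with $\mathcal A:=\{X\subseteq N:\,X\cup\{i\}\in\mathcal F\}$ and $\mathcal F(j,\bar i)$ with $\mathcal B:=\{Y\subseteq N:\,Y\cup\{j\}\in\mathcal F\}$. Since $\beta(\mathcal F)$ is the minimum of $|\mathcal F(i',\bar{j'})|$ over \emph{all} ordered pairs $i'\neq j'$, taking $(i',j')=(i,j)$ and $(i',j')=(j,i)$ gives $\beta(\mathcal F)\le\min\{|\mathcal A|,|\mathcal B|\}$; thus it suffices to bound this minimum.

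Next I would read off how the IU-conditions on $\mathcal F$ couple $\mathcal A$ and $\mathcal B$. For $X\in\mathcal A$ and $Y\in\mathcal B$ we have $X\cup\{i\},\,Y\cup\{j\}\in\mathcal F$, so (\ref{eq-cap}) gives $(X\cup\{i\})\cap(Y\cup\{j\})=X\cap Y\neq\emptyset$, while (\ref{eq-cup}) gives $(X\cup\{i\})\cup(Y\cup\{j\})=(X\cup Y)\cup\{i,j\}\neq[n]$, i.e.\ $X\cup Y\neq N$. Hence $\mathcal A,\mathcal B\subseteq 2^{N}$ are cross-intersecting and \emph{cross-union-free}. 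Passing to complements inside $N$, set $\mathcal B^{\,c}=\{N\setminus Y:\,Y\in\mathcal B\}$; then for all $X\in\mathcal A$ and $Y\in\mathcal B$ one has $X\cap Y\neq\emptyset\iff X\not\subseteq N\setminus Y$ and $X\cup Y\neq N\iff N\setminus Y\not\subseteq X$. Therefore every member of $\mathcal A$ is incomparable to every member of $\mathcal B^{\,c}$; that is, $\mathcal A$ and $\mathcal B^{\,c}$ form a pair of nonempty \emph{cross-Sperner} families on the $(n-2)$-element ground set $N$, with $|\mathcal B^{\,c}|=|\mathcal B|$.

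The last step is the sharp product bound for cross-Sperner families: if $\mathcal P,\mathcal Q\subseteq 2^{[m]}$ are nonempty and cross-Sperner, then $|\mathcal P|\cdot|\mathcal Q|\le 4^{\,m-2}$, with equality for $\mathcal P=\{P:1\in P,\,2\notin P\}$ and $\mathcal Q=\{Q:2\in Q,\,1\notin Q\}$. Applying this with $m=n-2$ yields $|\mathcal A|\cdot|\mathcal B|=|\mathcal A|\cdot|\mathcal B^{\,c}|\le 4^{\,n-4}$, hence $\min\{|\mathcal A|,|\mathcal B|\}\le 2^{\,n-4}$ and $\beta(\mathcal F)\le 2^{\,n-4}$, as required. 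The genuinely nontrivial ingredient is this cross-Sperner product estimate, and I expect it to be where the main work lies: it does not follow from soft correlation inequalities, since estimating the up- and down-closures of $\mathcal Q$ via Kleitman's lemma only yields $|\mathcal P|\cdot|\mathcal Q|\le c\cdot 4^{m}$ with $c$ a constant strictly larger than $\tfrac1{16}$. One must therefore either quote it from the literature on cross-Sperner families or prove it directly — the natural route being a symmetric chain decomposition of $2^{[m]}$ (each chain can meet at most one of $\mathcal P$ and $\mathcal Q$) combined with an $i\!\to\!j$-type compression argument, together with the observation that $\mathcal P\cap\mathcal Q=\emptyset$ and that $\mathcal P\cap\mathcal Q^{\,c}$-type intersections are already IU of size at most $2^{m-2}$ by the IU-Theorem.
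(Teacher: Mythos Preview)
Your reduction is exactly the paper's: fix a pair $\{i,j\}$, observe that $\mathcal A=\mathcal F(i,\bar j)$ and $\mathcal B=\mathcal F(j,\bar i)$ are cross-intersecting and cross-union-free on $N=[n]\setminus\{i,j\}$, complement $\mathcal B$ to obtain a cross-Sperner pair $(\mathcal A,\mathcal B^{c})$ on an $(n-2)$-set, and deduce $\min\{|\mathcal A|,|\mathcal B|\}\le 2^{n-4}$ from the sharp cross-Sperner bound. The paper phrases the last step as Seymour's inequality $\sqrt{|\mathcal A|}+\sqrt{|\mathcal B^{c}|}\le 2^{(n-2)/2}$ followed by AM--GM, which is equivalent to your product form $|\mathcal A|\cdot|\mathcal B^{c}|\le 4^{\,n-4}$.

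The one place your commentary diverges from the paper is your assessment that the cross-Sperner product bound ``does not follow from soft correlation inequalities''. In fact the paper shows it does: following Seymour, one partitions $2^{N}$ into four classes according to whether a set contains a member of $\mathcal A$ and/or of $\mathcal B^{c}$, applies the Kleitman/Harris correlation inequality once to the resulting upset/downset pair, and obtains $\mu_p(\mathcal A)^{1/2}+\mu_p(\mathcal B^{c})^{1/2}\le 1$ directly. So no symmetric chain decomposition or compression is needed; the ``naive'' use of up/down-closures you describe is indeed too weak, but a slightly less naive use of the same correlation inequality is exactly right.
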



For $\mathcal{G}\subseteq 2^{[n]}$ and $X\subseteq [n]$, let $\mathcal{G}_{| X}=\{G\cap X: G\in \mathcal{G}\}$.
Frankl and Wang \cite{FW-stur} made partial progress by showing that if there exists a partition $[n]=X\cup Y$ such that $\mathcal{G}_{| X}$ is intersecting and $\mathcal{G}_{| Y}$ is $(|Y|-1)$-union, then indeed $\beta(\mathcal{G})\leq 2^{n-4}$.

Our first result of this paper confirms Conjecture \ref{cj1}.

\begin{theorem}\label{t1}
For any IU-family $\mathcal{G}\subseteq 2^{[n]}$, we have 
$
\beta(\mathcal{G})\leq2^{n-4}.
$
\end{theorem}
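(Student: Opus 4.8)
\emph{Plan.} The plan is to reduce Theorem~\ref{t1} to a clean statement about a \emph{pair} of families on an $(n-2)$-element ground set, and then to settle that statement with two applications of the Kleitman correlation inequality.

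\emph{Step 1: reduction.} Fix a pair $i\neq j$, set $m=n-2$, and identify $[n]\setminus\{i,j\}$ with $[m]$. Viewing $\mathcal{A}:=\mathcal{G}(i,\bar j)$ and $\mathcal{B}:=\mathcal{G}(j,\bar i)$ as families in $2^{[m]}$, I would first check that
\[
A\cap B\neq\emptyset\quad\text{and}\quad A\cup B\neq[m]\qquad(\forall\,A\in\mathcal{A},\ B\in\mathcal{B}).
\]
This is immediate: $A\cup\{i\}\in\mathcal{G}$ and $B\cup\{j\}\in\mathcal{G}$, and since $j\notin A\cup\{i\}$ and $i\notin B\cup\{j\}$ we have $(A\cup\{i\})\cap(B\cup\{j\})=A\cap B$ and $(A\cup\{i\})\cup(B\cup\{j\})=(A\cup B)\cup\{i,j\}$, so (\ref{eq-cap}) forces $A\cap B\neq\emptyset$ and (\ref{eq-cup}) forces $A\cup B\neq[m]$. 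Hence Theorem~\ref{t1} follows once we prove:

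\begin{claim*}
If $\mathcal{A},\mathcal{B}\subseteq 2^{[m]}$ with $A\cap B\neq\emptyset$ and $A\cup B\neq[m]$ for all $A\in\mathcal{A}$, $B\in\mathcal{B}$, then $\min(|\mathcal{A}|,|\mathcal{B}|)\le 2^{m-2}$.
\end{claim*}

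Indeed, applying the Claim to the two orderings of each pair $\{i,j\}$ shows that one of $|\mathcal{G}(i,\bar j)|,|\mathcal{G}(j,\bar i)|$ is at most $2^{m-2}=2^{n-4}$; minimizing over all ordered pairs gives $\beta(\mathcal{G})\le 2^{n-4}$ (the case where one of the two families is empty is covered automatically).

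\emph{Step 2: proof of the Claim.} Put $\mathcal{A}^{c}=\{[m]\setminus A:A\in\mathcal{A}\}$ and let $\mathcal{D}=\{X\subseteq[m]:X\subseteq A^{c}\text{ for some }A\in\mathcal{A}\}$ and $\mathcal{U}=\{X\subseteq[m]:X\supseteq A^{c}\text{ for some }A\in\mathcal{A}\}$ be the down-set and up-set generated by $\mathcal{A}^{c}$. The two hypotheses say precisely that $\mathcal{B}\cap\mathcal{D}=\emptyset$ (cross-intersection) and $\mathcal{B}\cap\mathcal{U}=\emptyset$ (cross-union), while trivially $\mathcal{A}^{c}\subseteq\mathcal{D}\cap\mathcal{U}$. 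Writing $p=|\mathcal{D}|/2^{m}$, $q=|\mathcal{U}|/2^{m}$, the Kleitman correlation inequality \cite{Kle1966} (a down-set and an up-set are negatively correlated) gives $|\mathcal{D}\cap\mathcal{U}|\le|\mathcal{D}||\mathcal{U}|/2^{m}=pq\,2^{m}$, hence
\[
|\mathcal{A}|=|\mathcal{A}^{c}|\le|\mathcal{D}\cap\mathcal{U}|\le pq\,2^{m},
\]
and, combining inclusion--exclusion with the same inequality,
\[
|\mathcal{B}|\le 2^{m}-|\mathcal{D}\cup\mathcal{U}|=2^{m}-|\mathcal{D}|-|\mathcal{U}|+|\mathcal{D}\cap\mathcal{U}|\le 2^{m}(1-p)(1-q).
\]
Since $p,q\in[0,1]$, we conclude
\[
\min(|\mathcal{A}|,|\mathcal{B}|)\le 2^{m}\min\!\bigl(pq,(1-p)(1-q)\bigr)\le 2^{m}\sqrt{p(1-p)}\sqrt{q(1-q)}\le \tfrac14\cdot 2^{m}=2^{m-2}.
\]

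\emph{Where the difficulty lies.} The only genuinely clever point is the set-up of Step~2. The naive move---gluing $\mathcal{A}$ and $\mathcal{B}$ into an IU-family on $n$ letters---only yields $|\mathcal{A}|+|\mathcal{B}|\le 2^{n-2}$, i.e.\ $\min\le 2^{n-3}$, which is off by a factor of $2$; the point is instead to attach the down-set/up-set pair $(\mathcal{D},\mathcal{U})$ to $\mathcal{A}^{c}$ so that \emph{both} halves of the IU condition collapse into the single avoidance statement $\mathcal{B}\subseteq 2^{[m]}\setminus(\mathcal{D}\cup\mathcal{U})$, after which Kleitman---used once to bound $|\mathcal{A}|$ and once to bound $|\mathcal{B}|$---recovers the missing factor. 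Everything else is routine; I only expect to double-check the indexing in the reduction (that $i,j$ are removed so that $\mathcal{A},\mathcal{B}$ genuinely live on $[m]$) and the behavior for small $n$, where the bound is vacuous or forces $\beta(\mathcal{G})=0$.
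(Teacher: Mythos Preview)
Your proof is correct. Step~1 is exactly the paper's reduction: for any pair $i\neq j$ the families $\mathcal{G}(i,\bar j)$ and $\mathcal{G}(j,\bar i)$ are cross-intersecting and cross $(m-1)$-union on $[m]=[n]\setminus\{i,j\}$, so it suffices to bound $\min(|\mathcal{A}|,|\mathcal{B}|)$ for any such pair.

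Step~2 is where you and the paper differ in packaging, though not in substance. The paper passes to $\mathcal{B}^c$, observes that $\mathcal{A}$ and $\mathcal{B}^c$ are \emph{cross-Sperner}, and then invokes (and reproves) Seymour's inequality $\mu_p(\mathcal{F})^{1/2}+\mu_p(\mathcal{G})^{1/2}\le 1$ for cross-Sperner families; that inequality is itself derived from the Kleitman correlation inequality via a four-part partition of $2^{[m]}$, followed by AM--GM. You instead pass to $\mathcal{A}^c$, take the down-set $\mathcal{D}$ and up-set $\mathcal{U}$ it generates, and read off $|\mathcal{A}|\le|\mathcal{D}\cap\mathcal{U}|$ and $|\mathcal{B}|\le 2^m-|\mathcal{D}\cup\mathcal{U}|$ directly; one application of Kleitman bounds each side, and $\min(pq,(1-p)(1-q))\le\sqrt{p(1-p)q(1-q)}\le\tfrac14$ finishes. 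Both arguments rest on the same correlation inequality and the same AM--GM step; yours is a bit more streamlined in that it bypasses the cross-Sperner detour and the four-way partition, while the paper's route has the advantage of isolating Seymour's inequality as a stand-alone tool.
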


\subsection{Upper bound for separated families}

 Let $m,k$ be positive integers and suppose $[m]=\bigcup_{i\in[k]}X_i$,  where $X_i, i\in [k]$ are pairwise disjoint and $|X_i|=n$. 
Assume further that the elements of $X_i$ are ordered.
Denote the smallest element of $X_i$ by $v_i$.
For any $\ell\in [k]$, we define 
$$\mathcal{H}(n,k,\ell)=\left\{H\in\binom {[m]}{\ell}:|H\cap X_i|\le 1,i\in [k]\right\}.$$ 
A family $\mathcal{F}\subseteq \mathcal{H}(n,k,\ell)$ is called  a \textit{separated family}. Two separated families  $\mathcal{F}\subseteq\mathcal{H}(n,k,\ell)$ and $\mathcal{G}\subseteq\mathcal{H}(n,k,\ell')$ are said to be \textit{cross $t$-intersecting} if $|F\cap G|\geq  t$
 for all $F\in \mathcal{F}$ and $G\in \mathcal{G}$. For $t=1$, we simply
say that they are cross intersecting.

Recently, Frankl, Liu, Wang and Yang \cite{F23} determined the maximum possible sum of sizes of non-empty cross intersecting separated families.

\begin{theorem}\cite{F23}\label{F23}
 Let $n,k,\ell,\ell'$ be integers with $n>3\ell$ and $k\ge \ell\ge \ell' \ge 2$. We define 
\begin{align*}
\mathcal{F}_0'=&\left \{ F\in\mathcal{H}(n,k,\ell) : F\cap\{v_1,\ldots,v_{\ell'}\}\neq \emptyset\right \} ~\text{and}~ \mathcal{G}_0'=\left \{ \{v_1,\ldots,v_{\ell'}\} \right \}.
     \end{align*}
Suppose that $\mathcal{F}\subseteq\mathcal{H}(n,k,\ell)$ and $\mathcal{G}\subseteq\mathcal{H}(n,k,\ell')$ are both non-empty  and cross intersecting, then
\begin{align*}
        |\mathcal{F}|+ |\mathcal{G}|\le |\mathcal{F}_0'|+|\mathcal{G}_0'|= \sum\limits_{j\in[\ell]} 
     \left(\binom{k}{j}-\binom{k-\ell'}{j}\right)\binom{k-j}{\ell-j}(n-1)^{\ell-j}+1.
     \end{align*}
\end{theorem}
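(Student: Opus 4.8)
The plan is to run a shifting/compression argument adapted to the "separated" setting, reducing to an extremal configuration in which one of the two families is as small as possible. First I would observe that since $\mathcal{G}\neq\emptyset$, pick any $G_0\in\mathcal{G}$; then every $F\in\mathcal{F}$ meets $G_0$, so $\mathcal{F}$ is contained in the "cone" of sets hitting the at most $\ell'$ blocks that $G_0$ touches. The key structural move is a block-wise shifting operator $S_{i}$ that, within each block $X_i$, pushes every chosen element down toward the minimal element $v_i$. One checks that such shifts preserve membership in $\mathcal{H}(n,k,\ell)$, preserve the sizes $|\mathcal{F}|,|\mathcal{G}|$, and do not destroy the cross $t$-intersecting property (this is the standard verification: if $F$ and $G$ intersect in block $i$ before shifting, they still do after, and elements moved down cannot create a "miss" without a compensating "hit"). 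After exhaustively shifting, we may assume both $\mathcal{F}$ and $\mathcal{G}$ are \emph{shifted}, meaning closed under replacing any element of $X_i$ by a smaller one.

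Next I would exploit shiftedness to pin down $\mathcal{G}$. In a shifted cross intersecting pair with $\mathcal{G}$ nonempty and $|\mathcal{G}|$ contributing to a maximum sum, the "cheapest" way to force $\mathcal{F}$ large is to make $\mathcal{G}$ a single set consisting of the smallest available elements, i.e. $\mathcal{G}=\mathcal{G}_0'=\{\{v_1,\dots,v_{\ell'}\}\}$; any $G$ in a shifted $\mathcal{G}$ can be assumed to be $\{v_1,\dots,v_{\ell'}\}$ by a down-shift, and one argues that if $|\mathcal{G}|\ge 2$ then one can transfer sets from $\mathcal{G}$ to $\mathcal{F}$ without decreasing $|\mathcal{F}|+|\mathcal{G}|$, using $n>3\ell$ to guarantee enough room. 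Once $\mathcal{G}=\{\{v_1,\dots,v_{\ell'}\}\}$, the cross intersecting condition says exactly that $\mathcal{F}\subseteq\mathcal{F}_0'=\{F\in\mathcal{H}(n,k,\ell): F\cap\{v_1,\dots,v_{\ell'}\}\neq\emptyset\}$, so $|\mathcal{F}|\le|\mathcal{F}_0'|$ and the bound $|\mathcal{F}|+|\mathcal{G}|\le|\mathcal{F}_0'|+|\mathcal{G}_0'|$ follows. Finally the closed-form count of $|\mathcal{F}_0'|$ is obtained by inclusion–exclusion on which of $v_1,\dots,v_{\ell'}$ lie in $F$: choose a size-$j$ intersection pattern among the $k$ blocks, of which between $1$ and $\min(j,\ell')$ hit the special block-minima, and fill the remaining $\ell-j$ blocks freely with $n-1$ non-minimal choices each, giving $\sum_{j\in[\ell]}\left(\binom{k}{j}-\binom{k-\ell'}{j}\right)\binom{k-j}{\ell-j}(n-1)^{\ell-j}$, plus the single set of $\mathcal{G}_0'$.

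The main obstacle I anticipate is the transfer/exchange step that collapses $\mathcal{G}$ to a single set: one must show that enlarging $\mathcal{G}$ beyond $\{\{v_1,\dots,v_{\ell'}\}\}$ costs at least as much in $\mathcal{F}$ as it gains in $\mathcal{G}$. Concretely, adding a set $G'$ to $\mathcal{G}$ that is not $\{v_1,\dots,v_{\ell'}\}$ forbids all $F\in\mathcal{F}$ disjoint from $G'$, and one needs a quantitative estimate showing the number of such newly-forbidden $F$ dominates $|\{G'\}|=1$; the hypothesis $n>3\ell$ (rather than merely $n>\ell$) should be exactly what is needed to make this counting go through when $\mathcal{G}$ is shifted but not trivial, by ensuring each block has many "free" elements outside any fixed $\ell$-set. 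A secondary technical point is checking that the shifting operators interact correctly with the constraint $|H\cap X_i|\le 1$ — shifting must never try to move an element onto an already-occupied minimal slot — which is handled by the usual convention that $S_i$ acts as the identity on sets where the shift is blocked.
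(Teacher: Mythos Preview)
Theorem~\ref{F23} is quoted from \cite{F23} and is not proved in this paper; there is no ``paper's own proof'' to compare against directly. What the paper does prove is the generalization Theorem~\ref{t3}, and its method is quite different from yours: after block-wise shifting it applies the reduction lemma (Lemma~\ref{4}) to pass from separated families to ordinary families $\mathcal{A}(\mathcal{F})\subseteq\binom{[k]}{\le\ell}$, $\mathcal{A}(\mathcal{G})\subseteq\binom{[k]}{\le\ell'}$, and then runs the \emph{necessary intersection point} machinery of Gupta--Mogge--Piga--Schulke (Lemma~\ref{35}, Theorem~\ref{n1}) with suitable non-increasing weight functions $\omega_1,\omega_2$. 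This yields the bound for all $n\ge 2$ as a maximum over finitely many explicit configurations, without ever performing an ad hoc exchange between $\mathcal{F}$ and $\mathcal{G}$.

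Your outline, by contrast, hinges on the exchange step ``if $|\mathcal{G}|\ge 2$ then remove a $G'$ from $\mathcal{G}$ and gain at least one set in $\mathcal{F}$'', and this step as you have written it does not go through. The sets $F\in\mathcal{H}(n,k,\ell)$ that become admissible when $G'$ is deleted are exactly those that miss $G'$ but meet every remaining member of $\mathcal{G}$; if $\mathcal{G}$ is large this set can be empty, so deleting a single $G'$ need not free up any $F$ at all. The marginal argument you sketch (``adding $G'$ forbids all $F$ disjoint from $G'$, and there are many such $F$'') counts the wrong thing: those $F$ may already have been forbidden by earlier members of $\mathcal{G}$. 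To salvage a direct exchange proof one would need either a careful choice of which $G'$ to remove (e.g.\ a lexicographically last one, together with a structural description of shifted $\mathcal{G}$) or a global double-count comparing $|\mathcal{G}|-1$ against $|\mathcal{F}_0'|-|\mathcal{F}|$ uniformly; in either case the role of $n>3\ell$ must be made explicit, and you have not done so. The shifting and the final enumeration of $|\mathcal{F}_0'|$ are fine, but the heart of the proof is missing.
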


In the same paper, the authors  proposed the following open problem.

\begin{problem}\cite{F23}\label{p1}
Does the conclusion of Theorem \ref{F23} hold for all values of $n\geq 2$, irrespective the values of $k\ge \ell\ge \ell'\geq 1$?
\end{problem}

To address this problem, we consider a more general framework of cross $t$-intersecting separated families. Within this setting, we establish the following sharp  bound on the sum of sizes of cross intersecting separated families.

\begin{theorem}\label{t3}
 Let $n,k,\ell,\ell'$ be integers with $n\geq 2$ and $k\ge \ell\ge \ell' \ge t\geq 1$. We define 
\begin{align*}
\mathcal{F}_0=\left \{ F\in\mathcal{H}(n,k,\ell) :|F\cap\{v_1,\ldots,v_{\ell'}\}|\ge t\right \} ~\text{and}~  \mathcal{G}_0=\left \{\{v_1,\ldots,v_{\ell'}\} \right \}.
     \end{align*}
 For every $a\ge 1$, we define  
\[ \mathcal{F}_a=\left \{ F\in\mathcal{H}(n,k,\ell) :\{v_1,\ldots,v_a\}\subseteq F\right \} ~\text{and}~ 
\mathcal{G}_a=\left \{ G\in\mathcal{H}(n,k,\ell') :|G\cap\{v_1,\ldots,v_a\}|\ge t \right \}.\]
If $\mathcal{F}\subseteq\mathcal{H}(n,k,\ell)$ and $\mathcal{G}\subseteq\mathcal{H}(n,k,\ell')$ are both non-empty  and cross $t$-intersecting, then
\begin{align*}
        |\mathcal{F}|+ |\mathcal{G}|\le\max \left\{|\mathcal{F}_0|+|\mathcal{G}_0|,\ \max_{a\in [\ell'+1 ,\ell] }\left\{|\mathcal{F}_a|+|\mathcal{G}_a|\right\}\right\}.
     \end{align*}
\end{theorem}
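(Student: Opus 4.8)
The plan is to compress $\mathcal F$ and $\mathcal G$ into a highly structured (``shifted'') configuration and then run a short case analysis, governed by one integer parameter, that matches each case to one of the extremal pairs $(\mathcal F_0,\mathcal G_0)$ or $(\mathcal F_a,\mathcal G_a)$. \emph{Step 1 (shifting).} For a block $X_i$ with an element $x$ later than $v_i$, and for a pair of blocks $X_i,X_j$ with $i<j$, I would introduce the two natural compression operators on families of separated sets: the within-block operator replacing $x$ by an earlier element of $X_i$ whenever that is possible, and the between-block operator moving the element of $X_j$ to the same position in $X_i$ when $X_i$ is unused. Each operator fixes $|\mathcal F|$ and $|\mathcal G|$, keeps the families inside $\mathcal H(n,k,\ell)$ and $\mathcal H(n,k,\ell')$, and—this is the one point needing real care—preserves the cross $t$-intersecting property: one checks a pair $(F',G')$ in the compressed families by splitting on whether each of $F',G'$ was actually moved, the delicate case being resolved by observing that a set which moved certifies that its image was already present, which supplies the missing unit in the intersection count. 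Iterating, I may assume $\mathcal F,\mathcal G$ are shifted, i.e.\ down-sets in the natural partial order on separated sets (lower positions; a block-$j$ element pushed down to an unused lower block).

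Since both families are non-empty, a shifted $\mathcal F$ contains the global minimum $\{v_1,\dots,v_\ell\}$ and a shifted $\mathcal G$ contains $\{v_1,\dots,v_{\ell'}\}$; hence already $\mathcal F\subseteq\mathcal F_0$ and $|G\cap\{v_1,\dots,v_\ell\}|\ge t$ for every $G\in\mathcal G$. \emph{Step 2 (parameter; case $a\ge\ell'+1$).} Let $a$ be the largest integer with $\{v_1,\dots,v_a\}\subseteq F$ for all $F\in\mathcal F$; this is well defined with $0\le a\le\ell$ because $\mathcal F$ is a non-empty down-set, and $\mathcal F\subseteq\mathcal F_a$ by definition. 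If $a\ge\ell'+1$ I would show $\mathcal G\subseteq\mathcal G_a$, which with $\mathcal F\subseteq\mathcal F_a$ yields $|\mathcal F|+|\mathcal G|\le|\mathcal F_a|+|\mathcal G_a|$. To prove $\mathcal G\subseteq\mathcal G_a$, fix $G\in\mathcal G$; maximality of $a$ gives some $F_0\in\mathcal F$ with $v_{a+1}\notin F_0$, and from $F_0$, using that $\mathcal F$ is a down-set, I would produce $F\in\mathcal F$ containing $\{v_1,\dots,v_a\}$ that disagrees with $G$ on every block other than $1,\dots,a$ (by pushing the positions and used blocks of $F$ downward, away from those occupied by $G$). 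Then $t\le|F\cap G|=|F\cap G\cap\{v_1,\dots,v_a\}|\le|G\cap\{v_1,\dots,v_a\}|$, so $G\in\mathcal G_a$.

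\emph{Step 3 (case $a\le\ell'$).} Here the target is $|\mathcal F|+|\mathcal G|\le|\mathcal F_0|+1=|\mathcal F_0|+|\mathcal G_0|$. Since $\mathcal F\subseteq\mathcal F_0$, it suffices to build an injection $\mathcal G\setminus\{\{v_1,\dots,v_{\ell'}\}\}\hookrightarrow\mathcal F_0\setminus\mathcal F$: to $G\in\mathcal G$ with $G\ne\{v_1,\dots,v_{\ell'}\}$ I would associate an $\ell$-element set $\widehat G$, obtained by pushing $G$ downward only as far as the condition $a\le\ell'$ permits and then adjoining the lowest-indexed first-elements of unused blocks, arranged so that $\widehat G\in\mathcal F_0$ (using $|G\cap\{v_1,\dots,v_\ell\}|\ge t$ together with the fact that $G$, not being the minimum, omits some $v_i$ with $i\le\ell'$) while $\widehat G\notin\mathcal F$ (because $\widehat G$ lies above, in the partial order, a set that maximality of $a$ forces out of the down-set $\mathcal F$); injectivity is immediate. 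Alternatively one may first reach $|\mathcal F|+|\mathcal G|\le|\mathcal F_a|+|\mathcal G_a|$ for the current $a$ and then check the elementary inequality $|\mathcal F_a|+|\mathcal G_a|\le|\mathcal F_0|+|\mathcal G_0|$ for $a\le\ell'$ by the same kind of injection. Taking the two cases together with $0\le a\le\ell$ gives exactly the bound of Theorem~\ref{t3}; and specialising the resulting extremal pair to small $n$, where $(\mathcal F_a,\mathcal G_a)$ with $a\ge\ell'+1$ can beat $(\mathcal F_0,\mathcal G_0)$, yields the promised negative answer to Problem~\ref{p1}.

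\emph{Expected main obstacle.} The two membership verifications—that the witness $F$ of Step 2 genuinely lies in $\mathcal F$, and that $\widehat G$ of Step 3 genuinely lies in $\mathcal F_0\setminus\mathcal F$—are where the down-set structure has to be exploited with care, and I expect them to force a somewhat fiddly subcase analysis. For instance, when every member of $\mathcal F$ uses block $a+1$ but not always at its first element the naive witness does not avoid block $a+1$, and when $k$ is small the used blocks of $F$ cannot all be chosen outside $\sigma(G)$, so one must argue through positions rather than through blocks. The other point that needs a real (if standard-style) argument, rather than a one-line remark, is that the between-block compression preserves cross $t$-intersection.
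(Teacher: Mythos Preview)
Your between-block compression in Step~1 does \emph{not} preserve the cross $t$-intersecting property, and the rest of the argument collapses without it. Take $n=2$, $k=3$, $\ell=\ell'=2$, $t=1$, with $X_1=\{1,2\}$, $X_2=\{3,4\}$, $X_3=\{5,6\}$ (so $v_1=1,v_2=3,v_3=5$). Let $\mathcal F=\{\{1,3\},\{2,3\}\}$ and $\mathcal G=\{\{3,5\}\}$; both are already within-block shifted and are cross intersecting through the element $3$. Applying the between-block shift from block $2$ to block $1$, neither set in $\mathcal F$ moves (block $1$ is occupied, albeit in $\{2,3\}$ at position $2$), while $\{3,5\}$ moves to $\{1,5\}$; now $\{2,3\}\cap\{1,5\}=\emptyset$. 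The failure is structural: in ordinary shifting the obstruction ``$i\in F$'' guarantees a new intersection at the single element $i$, but here ``block $i$ is occupied by $F$'' need not land on the same position as the shifted $G$. Since Steps~2 and~3 both lean on $\mathcal F$ being a down-set for the \emph{full} (within $+$ between-block) order---to manufacture the witness $F$ disjoint from $G$ outside $\{v_1,\dots,v_a\}$, and to certify $\widehat G\notin\mathcal F$---the argument does not go through with within-block shifting alone; and the student's own ``expected main obstacle'' (producing $F$ when $F_0$ already sits at position~$1$ in a block used by $G$) is exactly where only a between-block move would help.

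The paper circumvents this obstruction entirely: it uses only within-block shifting, then passes via a reduction lemma from the separated families to the families $\mathcal A(\mathcal F)\subseteq\binom{[k]}{\le\ell}$ and $\mathcal A(\mathcal G)\subseteq\binom{[k]}{\le\ell'}$ of block-index sets, equipping the target side with non-increasing weights $\omega_1(j)=\binom{k-j}{\ell-j}(n-1)^{\ell-j}$ and $\omega_2(j)=\binom{k-j}{\ell'-j}(n-1)^{\ell'-j}$ so that $|\mathcal F|\le\omega_1(\mathcal A(\mathcal F))$ and $|\mathcal G|\le\omega_2(\mathcal A(\mathcal G))$. The weighted problem on $\binom{[k]}{\le\ell}\times\binom{[k]}{\le\ell'}$ is then solved by the necessary-intersection-point technique of Gupta--Mogge--Piga--Schulke, which shows the maximum is attained at one of the canonical pairs $(\mathcal S_k(a,\ell),\mathcal K_k(a,\ell',t))$ or $(\mathcal K_k(a,\ell,t),\mathcal S_k(a,\ell'))$; translating back gives exactly the pairs $(\mathcal F_a,\mathcal G_a)$ and $(\mathcal F_0,\mathcal G_0)$. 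In short, the paper trades your would-be between-block shift for a weight function that records the number of separated extensions of a block pattern, and this is precisely what lets the between-block ``shifting'' happen safely at the level of $[k]$ rather than $[m]$.
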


This general result not only extends Theorem \ref{F23} ($t=1$) but also generalizes a main result of Liu \cite{L23} (in the case $\ell=\ell'$, the bound reduces to $|\mathcal{F}_0| + |\mathcal{G}_0|$).
By constructing explicit counterexamples guided by Theorem \ref{t3}, we show that the conclusion of Theorem \ref{F23} does not hold in general for all \( n \geq 2 \) and all \( k \ge \ell \ge \ell' \geq 1 \). We thereby provide a negative answer to Problem \ref{p1}.

\begin{example}\label{ex1}
Under the notation of Theorem \ref{t3}.
Let $t=1$ and $(n, k, \ell, \ell')=(2,3,3,2)$. 
Let $\mathcal{F}\subseteq\mathcal{H}(2,3,3)$ and $\mathcal{G}\subseteq\mathcal{H}(2,3,2)$ be non-empty cross intersecting families.
By Theorem \ref{t3}, we have
$$
 |\mathcal{F}|+ |\mathcal{G}|\le\max \left\{|\mathcal{F}_0|+|\mathcal{G}_0|,\ |\mathcal{F}_3|+|\mathcal{G}_3|\right\}.
$$
By simple calculations, we get
\begin{align*}
|\mathcal{F}_0|+|\mathcal{G}_0| =\sum\limits_{j\in[3]} 
     \left(\binom{3}{j}-\binom{3-2}{j}\right)\binom{3-j}{3-j}(2-1)^{3-j}+1=7
     \end{align*}
     and 
     \[ |\mathcal{F}_3|+|\mathcal{G}_3|=\sum\limits_{j\in[2]} 
     \left(\binom{3}{j}-\binom{3-3}{j}\right)\binom{3-j}{2-j}(2-1)^{2-j}+1=10. \]
However, $\mathcal{F}_3$ and $\mathcal{G}_3$ are cross intersecting and $|\mathcal{F}_3|+|\mathcal{G}_3|>|\mathcal{F}_0|+|\mathcal{G}_0|$.
Moreover, let $t=1$ and  $(n, k, \ell, \ell')=(2,4,4,2)$. We similarly have 
$|\mathcal{F}_4|+|\mathcal{G}_4|>|\mathcal{F}_0|+|\mathcal{G}_0|$.

\end{example}

\noindent 
{\bf Organization.} 
The remainder of this paper is structured as follows. In Section \ref{se2}, we prove Theorem \ref{t1}. In Section \ref{se3}, we prove Theorem \ref{t3}.

\section{Proof of Theorem \ref{t1}} \label{se2}

Our proof relies on a foundational result of Seymour \cite{Sey1973} concerning cross-Sperner families. Recall that two families $\mathcal{F},\mathcal{G}\subseteq 2^{[n]}$ are called \textit{cross-Sperner} if for all $F\in\mathcal{F}$ and $G\in\mathcal{G}$, we have neither $F\subseteq G$ nor $G\subseteq F$. To make the presentation self-contained and to serve our purposes more effectively, we have derived a strengthened form of this result (Theorem \ref{t2}). We include its proof, which is inspired by Seymour's original arguments.

To facilitate our analysis, we recall some  notions. A family $\mathcal{F} \subseteq 2^{[n]}$ is called a \textit{downset} (resp. an \textit{upset}) if for every $A \in \mathcal{F}$ and every $B \subseteq [n]$ satisfying $B \subseteq A$ (resp. $A \subseteq B$), it holds that $B \in \mathcal{F}$.
For any $p\in (0,1)$ and $F\in \mathcal{F} \subseteq 2^{[n]}$, the \textit{product measure} $\mu_p$ on $F$ is defined by $\mu_p(F)=p^{|F|}(1-p)^{n-|F|}$. We extend this measure to the whole family by setting $\mu_p(\mathcal{F}) = \sum_{F \in \mathcal{F}} \mu_p(F)$.

A key ingredient in our derivation is the following correlation inequality:

\begin{lemma}\cite{S12}\label{S12}
Let  $\mathcal{F} \subseteq 2^{[n]}$ be a downset and $\mathcal{G} \subseteq 2^{[n]}$ be an upset. Then
$$
\mu_p(\mathcal{F}\cap \mathcal{G})\leq\mu_p(\mathcal{F})\mu_p(\mathcal{G}).
$$
\end{lemma}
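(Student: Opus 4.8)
The plan is to recognize Lemma \ref{S12} as the Harris inequality --- the special case of the FKG / Ahlswede--Daykin correlation inequality asserting that a decreasing event and an increasing event are negatively correlated under a product measure --- and to prove it by the standard induction on $n$, conditioning on the membership of one fixed element at each step. For $n\le 1$ the inequality is verified directly against the handful of possible pairs $(\mathcal{F},\mathcal{G})$; note also that it holds trivially whenever one of the two families is empty, since the left-hand side is then $0$.

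For the inductive step I single out the element $n$ and, using the notation of the introduction, pass to the restricted families $\mathcal{H}(n),\mathcal{H}(\bar n)\subseteq 2^{[n-1]}$, where $\mathcal{H}$ ranges over $\mathcal{F}$, $\mathcal{G}$, and $\mathcal{F}\cap\mathcal{G}$. Splitting a family according to whether its members contain $n$ gives
\[
\mu_p(\mathcal{H})=(1-p)\,\mu_p\bigl(\mathcal{H}(\bar n)\bigr)+p\,\mu_p\bigl(\mathcal{H}(n)\bigr),
\]
where $\mu_p$ on the right now denotes the product measure on $2^{[n-1]}$; moreover $(\mathcal{F}\cap\mathcal{G})(\bar n)=\mathcal{F}(\bar n)\cap\mathcal{G}(\bar n)$ and $(\mathcal{F}\cap\mathcal{G})(n)=\mathcal{F}(n)\cap\mathcal{G}(n)$, so this decomposition is compatible with taking intersections. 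The crucial point is that monotonicity descends to the restrictions: $\mathcal{F}(\bar n)$ and $\mathcal{F}(n)$ are downsets in $2^{[n-1]}$, and since $\mathcal{F}$ is a downset, $H\cup\{n\}\in\mathcal{F}$ forces $H\in\mathcal{F}$, whence $\mathcal{F}(n)\subseteq\mathcal{F}(\bar n)$; dually $\mathcal{G}(\bar n),\mathcal{G}(n)$ are upsets with $\mathcal{G}(\bar n)\subseteq\mathcal{G}(n)$. Because $\mu_p$ is monotone under inclusion, this yields $\mu_p(\mathcal{F}(n))\le\mu_p(\mathcal{F}(\bar n))$ and $\mu_p(\mathcal{G}(\bar n))\le\mu_p(\mathcal{G}(n))$.

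Writing $q=1-p$ and abbreviating $a_0=\mu_p(\mathcal{F}(\bar n))$, $a_1=\mu_p(\mathcal{F}(n))$, $b_0=\mu_p(\mathcal{G}(\bar n))$, $b_1=\mu_p(\mathcal{G}(n))$ --- so that $a_1\le a_0$ and $b_0\le b_1$ --- the induction hypothesis applied to $\mathcal{F}(\bar n)\cap\mathcal{G}(\bar n)$ and to $\mathcal{F}(n)\cap\mathcal{G}(n)$ gives
\[
\mu_p(\mathcal{F}\cap\mathcal{G})\le q\,a_0 b_0+p\,a_1 b_1,
\]
while $\mu_p(\mathcal{F})\,\mu_p(\mathcal{G})=(q a_0+p a_1)(q b_0+p b_1)$. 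It then suffices to observe the elementary identity
\[
(q a_0+p a_1)(q b_0+p b_1)-\bigl(q\,a_0 b_0+p\,a_1 b_1\bigr)=pq\,(a_0-a_1)(b_1-b_0),
\]
whose right-hand side is nonnegative exactly because $a_0\ge a_1$ and $b_1\ge b_0$. Combining the last three displays closes the induction and hence proves the lemma. I do not expect a genuine obstacle here: the argument is routine, and the only step that requires care is pinning down the directions of the two inclusions $\mathcal{F}(n)\subseteq\mathcal{F}(\bar n)$ and $\mathcal{G}(\bar n)\subseteq\mathcal{G}(n)$, since it is precisely these that make the correction term $pq\,(a_0-a_1)(b_1-b_0)$ come out with the correct sign.
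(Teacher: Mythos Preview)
Your proof is correct; it is the standard Harris--Kleitman induction (condition on one coordinate, apply the hypothesis to the two slices, and use the Chebyshev-type identity to control the cross terms), and every step checks out, including the two inclusions $\mathcal{F}(n)\subseteq\mathcal{F}(\bar n)$ and $\mathcal{G}(\bar n)\subseteq\mathcal{G}(n)$ and the algebraic identity $pq(a_0-a_1)(b_1-b_0)$.

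As for comparison with the paper: the paper does not actually prove Lemma~\ref{S12} at all --- it is quoted from \cite{S12} as a known correlation inequality and used as a black box in the proof of Theorem~\ref{t2}. So there is no ``paper's own proof'' to compare against; your argument simply supplies what the paper takes for granted.
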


\begin{theorem}\label{t2}
Let $\mathcal{F}, \mathcal{G} \subseteq 2^{[n]}$ be cross-Sperner families. Then 
$$
\mu_p(\mathcal{F})^{\frac{1}{2}}+\mu_p(\mathcal{G})^{\frac{1}{2}}\leq 1.
$$
\end{theorem}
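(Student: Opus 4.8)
The plan is to deduce Theorem \ref{t2} from the correlation inequality in Lemma \ref{S12} by passing to suitable down/up-closures of $\mathcal{F}$ and $\mathcal{G}$. First I would let $\mathcal{D}$ be the downset generated by $\mathcal{F}$, that is, $\mathcal{D}=\{A\subseteq [n]: A\subseteq F \text{ for some } F\in\mathcal{F}\}$, and let $\mathcal{U}$ be the upset generated by $\mathcal{G}$, namely $\mathcal{U}=\{B\subseteq [n]: G\subseteq B \text{ for some } G\in\mathcal{G}\}$. Since $\mathcal{F},\mathcal{G}$ are cross-Sperner, no $F\in\mathcal{F}$ contains any $G\in\mathcal{G}$ and no $G$ contains any $F$; I claim this forces $\mathcal{D}\cap\mathcal{U}=\emptyset$. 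Indeed, if some set $C$ lay in both, then $C\subseteq F$ for some $F\in\mathcal{F}$ and $G\subseteq C$ for some $G\in\mathcal{G}$, giving $G\subseteq F$, contradicting the cross-Sperner condition. Hence $\mu_p(\mathcal{D})+\mu_p(\mathcal{U})=\mu_p(\mathcal{D}\cup\mathcal{U})+\mu_p(\mathcal{D}\cap\mathcal{U})=\mu_p(\mathcal{D}\cup\mathcal{U})\le 1$.

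Next I would apply Lemma \ref{S12}. The complement $\overline{\mathcal{U}}=2^{[n]}\setminus\mathcal{U}$ of an upset is a downset, and $\mathcal{D}\subseteq\overline{\mathcal{U}}$ by the disjointness just established, so $\mu_p(\mathcal{D}\cap\mathcal{U})=0$ can also be read as $\mathcal{D}$ and $\mathcal{U}$ being "negatively correlated" in the strongest possible way. To get the square-root bound I want a multiplicative inequality, so instead I would consider the downset $\mathcal{D}$ and the \emph{downset} $\overline{\mathcal{U}}$ and note $\mathcal{D}\subseteq \overline{\mathcal{U}}$; alternatively, and more to the point, apply Lemma \ref{S12} to the downset $\mathcal{D}$ and the upset $\mathcal{U}$ directly: it gives $0=\mu_p(\mathcal{D}\cap\mathcal{U})\le\mu_p(\mathcal{D})\mu_p(\mathcal{U})$, which is vacuous. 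The useful move is to iterate: for any integer $r\ge 1$, working in $2^{[rn]}$ with $r$ disjoint copies, the $r$-fold product family $\mathcal{D}^{\otimes r}$ is still a downset, $\mathcal{U}^{\otimes r}$ an upset, and these remain disjoint; moreover $\mu_p(\mathcal{D}^{\otimes r})=\mu_p(\mathcal{D})^r$ and similarly for $\mathcal{U}$. But disjointness only yields $\mu_p(\mathcal{D})^r+\mu_p(\mathcal{U})^r\le 1$, which as $r\to\infty$ just says $\mu_p(\mathcal{D}),\mu_p(\mathcal{U})\le 1$ — too weak. So the real argument must instead split the ground set.

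The key idea I would actually pursue: partition $[n]$ as $[n]=S\sqcup T$, restrict $\mathcal{F}$ to a "downset-like" behavior on $S$ and $\mathcal{G}$ to an "upset-like" behavior on $T$, and exploit that the $\mu_p$-measure factorizes over a partition. Concretely, following Seymour's original approach, I would induct on $n$. For the inductive step, fix an element $n\in[n]$ and split each family by whether it contains $n$: write $\mathcal{F}=\mathcal{F}(n)\cup\mathcal{F}(\bar n)$ in the notation of the paper (identifying $\mathcal{F}(n)$ with its traces on $[n-1]$), similarly for $\mathcal{G}$. The cross-Sperner property descends to the relevant pairs among $\mathcal{F}(n),\mathcal{F}(\bar n),\mathcal{G}(n),\mathcal{G}(\bar n)$ on the ground set $[n-1]$ — e.g. $\mathcal{F}(\bar n)$ and $\mathcal{G}(n)$ are cross-Sperner on $[n-1]$ since if $F\subseteq G\setminus\{n\}$ then $F\subseteq G$. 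Then $\mu_p(\mathcal{F})=(1-p)\mu_p(\mathcal{F}(\bar n))+p\,\mu_p(\mathcal{F}(n))$ and similarly for $\mathcal{G}$, and I would apply the inductive hypothesis to the surviving cross-Sperner pairs together with the elementary concavity inequality $\sqrt{(1-p)x+py}\ge$ (appropriate combination) — more precisely, I expect to need that if $\sqrt{a_i}+\sqrt{b_i}\le 1$ for the two branches then $\sqrt{(1-p)a_1+pa_2}+\sqrt{(1-p)b_1+pb_2}\le 1$, which follows from concavity of the square root (Jensen / the Cauchy–Schwarz-type inequality $\sqrt{uv}+\sqrt{(1-u)(1-v)}\ge\ldots$). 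The base case $n=1$ or the degenerate cases where one family is empty or contains $\emptyset$ or $[n]$ are checked directly.

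The main obstacle I anticipate is handling the cross-term structure correctly in the inductive step: when both $n\in F$ and $n\in G$ is possible, the pair $\mathcal{F}(n),\mathcal{G}(n)$ need \emph{not} be cross-Sperner on $[n-1]$ (adding $n$ back can create containments but also the traces can contain each other), so one must carefully choose which branches to recurse on and possibly modify the families — Seymour's trick is to symmetrize or to note that one may assume, after the operation, that not both contain $n$, or to use a shifting/compression argument. Getting this bookkeeping right, and selecting the correct convexity inequality to combine the two branches so that the $\sqrt{\mu_p(\mathcal{F})}+\sqrt{\mu_p(\mathcal{G})}\le 1$ bound is preserved (rather than a weaker additive bound), is the crux of the proof.
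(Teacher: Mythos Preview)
Your proposal does not reach a complete argument, and the inductive route you settle on contains a genuine error. You write that ``if $\sqrt{a_i}+\sqrt{b_i}\le 1$ for the two branches then $\sqrt{(1-p)a_1+pa_2}+\sqrt{(1-p)b_1+pb_2}\le 1$, which follows from concavity of the square root.'' This implication is false: take $p=\tfrac12$, $a_1=b_2=1$, $a_2=b_1=0$; each branch satisfies $\sqrt{a_i}+\sqrt{b_i}=1$, yet the combined expression equals $\sqrt{1/2}+\sqrt{1/2}=\sqrt2>1$. Concavity of $\sqrt{\,\cdot\,}$ yields $\sqrt{(1-p)a_1+pa_2}\ge (1-p)\sqrt{a_1}+p\sqrt{a_2}$, which is the wrong direction for what you need. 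So even granting that the pairs $(\mathcal{F}(n),\mathcal{G}(n))$ and $(\mathcal{F}(\bar n),\mathcal{G}(\bar n))$ remain cross-Sperner on $[n-1]$, the inductive step does not close. (Your observation that the mixed pair $(\mathcal{F}(\bar n),\mathcal{G}(n))$ is cross-Sperner is also only half right: you checked $F\subseteq G\setminus\{n\}\Rightarrow F\subseteq G$, but $G\setminus\{n\}\subseteq F$ can occur, e.g.\ $F=\{1,2\}$, $G=\{1,n\}$.)

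The paper's proof stays with your first idea of up-closures and the correlation inequality but extracts a \emph{multiplicative} consequence you did not find. Partition $2^{[n]}$ into four pieces according to whether a set contains some member of $\mathcal{F}$ and/or some member of $\mathcal{G}$: write $\mathcal{A}$ (both), $\mathcal{B}$ (only $\mathcal{F}$), $\mathcal{C}$ (only $\mathcal{G}$), $\mathcal{D}$ (neither). Then $\mathcal{A}\cup\mathcal{B}$ is the upset generated by $\mathcal{F}$, while $\mathcal{B}\cup\mathcal{D}$ is the complement of the upset generated by $\mathcal{G}$ and hence a downset; their intersection is exactly $\mathcal{B}$. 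Lemma~\ref{S12} gives $\mu_p(\mathcal{B})\le(\mu_p(\mathcal{A})+\mu_p(\mathcal{B}))(\mu_p(\mathcal{B})+\mu_p(\mathcal{D}))$, and expanding using $\mu_p(\mathcal{A})+\mu_p(\mathcal{B})+\mu_p(\mathcal{C})+\mu_p(\mathcal{D})=1$ yields $\mu_p(\mathcal{B})\mu_p(\mathcal{C})\le\mu_p(\mathcal{A})\mu_p(\mathcal{D})$. Now AM--GM gives $\mu_p(\mathcal{B})\mu_p(\mathcal{C})\le\bigl(\tfrac{1-\mu_p(\mathcal{B})-\mu_p(\mathcal{C})}{2}\bigr)^2$, whence $\sqrt{\mu_p(\mathcal{B})}+\sqrt{\mu_p(\mathcal{C})}\le 1$. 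The cross-Sperner hypothesis is used only at the end, to observe $\mathcal{F}\subseteq\mathcal{B}$ and $\mathcal{G}\subseteq\mathcal{C}$. The step you were missing is precisely this four-way split: applying correlation to an upset and a downset whose intersection is a single cell of the partition, rather than to a disjoint pair.
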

\begin{proof}
Let 
\begin{align*}
\mathcal{A}=&\left\{A\subseteq [n]: \exists~ F\in \mathcal{F} \text{ s.t. } F\subseteq A \text{ and } \exists~ G\in \mathcal{G} \text{ s.t. } G\subseteq A\right\},\\
\mathcal{B}=&\left\{B\subseteq [n]: \exists~ F\in \mathcal{F} \text{ s.t. } F\subseteq B \text{ but } \nexists~ G\in \mathcal{G} \text{ s.t. } G\subseteq B \right\},\\
\mathcal{C}=&\left\{C\subseteq [n]: \nexists~ F\in \mathcal{F} \text{ s.t. } F\subseteq C \text{ but } \exists~ G\in \mathcal{G} \text{ s.t. } G\subseteq C \right\},\\
\mathcal{D}=&\left\{D\subseteq [n]: \nexists~ F\in \mathcal{F} \text{ s.t. } F\subseteq D \text{ and } \nexists~ G\in \mathcal{G} \text{ s.t. } G\subseteq D \right\}.
\end{align*}
Observe that $\mathcal{A}\cup \mathcal{B}$ is an upset and $\mathcal{B}\cup \mathcal{D}$ is a downset.
By Lemma \ref{S12}, we have 
$$
\mu_p((\mathcal{A}\cup \mathcal{B})\cap (\mathcal{B}\cup \mathcal{D}))\leq\mu_p(\mathcal{A}\cup \mathcal{B})\mu_p(\mathcal{B}\cup \mathcal{D}).
$$
Since $\mathcal{A}, \mathcal{B},\mathcal{C},\mathcal{D}$
are pairwise disjoint, we obtain
$$
\mu_p(\mathcal{B})\leq\left(\mu_p(\mathcal{A})+\mu_p(\mathcal{B})\right)\left(\mu_p(\mathcal{B})+\mu_p(\mathcal{D})\right).
$$
Since $2^{[n]}=\mathcal{A}\cup \mathcal{B}\cup \mathcal{C}\cup \mathcal{D}$ implies $\mu_p(\mathcal{A})+\mu_p(\mathcal{B})+\mu_p(\mathcal{C})+\mu_p(\mathcal{D})=1$, we further have
$$
\mu_p(\mathcal{B})\left( \mu_p(\mathcal{A})+\mu_p(\mathcal{B})+\mu_p(\mathcal{C})+\mu_p(\mathcal{D})\right)\leq\left(\mu_p(\mathcal{A})+\mu_p(\mathcal{B})\right)\left(\mu_p(\mathcal{B})+\mu_p(\mathcal{D})\right).
$$
It follows that
$$
\mu_p(\mathcal{B})\mu_p(\mathcal{C})\leq\mu_p(\mathcal{A})\mu_p(\mathcal{D}).
$$
Applying the AM-GM inequality yields 
$$
\mu_p(\mathcal{B})\mu_p(\mathcal{C})\leq\mu_p(\mathcal{A})\mu_p(\mathcal{D})\leq \left(\frac{\mu_p(\mathcal{A})+\mu_p(\mathcal{D})}{2}\right)^2=\left(\frac{1-\mu_p(\mathcal{B})-\mu_p(\mathcal{C})}{2}\right)^2.
$$
Consequently, 
$$
\mu_p(\mathcal{B})+\mu_p(\mathcal{C})+2\mu_p(\mathcal{B})^{\frac{1}{2}}\mu_p(\mathcal{C})^{\frac{1}{2}}\leq 1.
$$
Therefore, we have
$$
\mu_p(\mathcal{B})^{\frac{1}{2}}+\mu_p(\mathcal{C})^{\frac{1}{2}}\leq 1.
$$

Since  $\mathcal{F}$ and $\mathcal{G}$ are cross-Sperner, we infer that $\mathcal{F}\subseteq \mathcal{B}$ and $\mathcal{G}\subseteq \mathcal{C}$.
This implies that
$$
\mu_p(\mathcal{F})^{\frac{1}{2}}+\mu_p(\mathcal{G})^{\frac{1}{2}}\leq \mu_p(\mathcal{B})^{\frac{1}{2}}+\mu_p(\mathcal{C})^{\frac{1}{2}}\leq1,
$$
as desired.
\end{proof}

Having established Theorem \ref{t2}, we now proceed to apply this to resolve Conjecture \ref{cj1}.

\begin{proof}[Proof of Theorem \ref{t1}]
Let $x\neq y \in [n] $ and $\mathcal{A}= \mathcal{G}(x,\bar{y})$ and $\mathcal{B}= \mathcal{G}(y,\bar{x})$
Since $\mathcal{G}\subseteq 2^{[n]}$ is an IU-family, it follows that $\mathcal{A}, \mathcal{B} \subseteq 2^{[n]\backslash \{x,y\}}$ are cross intersecting and $(n-3)$-union.

Define $\mathcal{B}^c=\{B^c: B\in \mathcal{B}\}$, where $B^c=([n]\backslash\{x,y\}) \backslash G$. Since $\mathcal{A}$ and $ \mathcal{B} $ are cross intersecting and $(n-3)$-union, it follows that for any $A\in \mathcal{A}$ and  $B\in \mathcal{B}$, we have $A\cap B\neq \emptyset$ and $A\cup B\neq [n]\backslash\{x,y\}$. The former implies $A\nsubseteq B^c$, where the latter implies $B^c\nsubseteq A$. Therefore, $\mathcal{A}$ and $\mathcal{B}^c$ are cross-Sperner. Setting $p=\frac{1}{2}$ in Theorem \ref{t2}, we obtain $
|\mathcal{A}|^{\frac{1}{2}}+|\mathcal{B}^c|^{\frac{1}{2}}\leq 2^{\frac{n-2}{2}}
$. By $|\mathcal{B}|=|\mathcal{B}^c|$ and the AM-GM inequality,  we have
$$
|\mathcal{A}|^{\frac{1}{2}}|\mathcal{B}|^{\frac{1}{2}}\leq\left(\frac{|\mathcal{A}|^{\frac{1}{2}}+|\mathcal{B}|^{\frac{1}{2}}}{2}\right)^2\leq \frac{2^{n-2}}{4}=2^{n-4},
$$
which implies that $\min\{
|\mathcal{A}|,|\mathcal{B}|\}\leq 2^{n-4}.
$
Consequently,
$
\beta(\mathcal{G})\leq\min\{
|\mathcal{A}|,|\mathcal{B}|\}\leq 2^{n-4}.
$
\end{proof}

\section{Proof of Theorem \ref{t3}}\label{se3}
\subsection{Shifting operator and reduction lemma}
We commence our analysis  with the shifting technique \cite{E61}, which is one of the most powerful tools in extremal set theory.
Let  $\mathcal{F} \subseteq 2^{[k]}$ be a family and let $1 \leq i<j \leq k$ be integers. The \textit{shifting operator} $s_{i, j}$ on $\mathcal{F}$ is defined as follows:
$$s_{i, j}(\mathcal{F})=\left\{s_{i, j}(F): F \in \mathcal{F}\right\},$$
where
$$
s_{i, j}(F)= \begin{cases}
(F \backslash\{j\}) \cup\{i\} & \text { if } j \in F, i \notin F \text { and } (F \backslash\{j\}) \cup\{i\} \notin \mathcal{F}; \\ F & \text { otherwise. }\end{cases}
$$
A family $\mathcal{F} \subseteq 2^{[n]}$ is called \textit{shifted} or \textit{left-compressed} if $s_{i, j}(\mathcal{F})=  \mathcal{F}$ holds for all $1 \leq i<j \leq k$.  
It is a classical result that by applying the shifting operator repeatedly, every family will terminate at a shifted family. Notably, the shifting operator preserves the cross $t$-intersection property, as captured by the following lemma.

\begin{lemma}\cite{G23} \label{G23}
 Let $\mathcal{F},\mathcal{G}  \subseteq 2^{[k]}$  be  cross $t$-intersecting families and let $1 \leq i<j \leq k$ be integers.
Then $s_{i, j}(\mathcal{F})$ and $s_{i, j}(\mathcal{G})$ are cross $t$-intersecting.
\end{lemma}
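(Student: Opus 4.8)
The plan is to prove the statement by a local, element-wise argument rather than tracking the families globally. Writing $\mathcal{F}'=s_{i,j}(\mathcal{F})$ and $\mathcal{G}'=s_{i,j}(\mathcal{G})$, I will fix arbitrary sets $F'\in\mathcal{F}'$ and $G'\in\mathcal{G}'$ and show directly that $|F'\cap G'|\ge t$. The starting point is that $s_{i,j}$ acts as a bijection on each family, so $F'$ has a preimage $F\in\mathcal{F}$ and $G'$ a preimage $G\in\mathcal{G}$ with $|F\cap G|\ge t$ by hypothesis. Since $s_{i,j}$ alters a set only within the two coordinates $\{i,j\}$, I have $F'\setminus\{i,j\}=F\setminus\{i,j\}$ and likewise for $G$, so everything reduces to comparing the contribution of $\{i,j\}$ to the two intersections. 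Concretely, I will record the identity
\[
|F'\cap G'| = |F\cap G| - |F\cap G\cap\{i,j\}| + |F'\cap G'\cap\{i,j\}|,
\]
which shows it suffices to check that the $\{i,j\}$-contribution does not decrease, isolating the single configuration in which it does.

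The main body will be a case analysis according to whether $F$ (resp. $G$) is moved by $s_{i,j}$. Recall that $F$ is moved precisely when $j\in F$, $i\notin F$, and $(F\setminus\{j\})\cup\{i\}\notin\mathcal{F}$, in which case $F'=(F\setminus\{j\})\cup\{i\}$ satisfies $i\in F'$, $j\notin F'$. When neither set moves, $F'=F$ and $G'=G$ and there is nothing to prove; when both move, both $F'$ and $G'$ acquire the element $i$, so $i\in F'\cap G'$ compensates exactly for the loss of the common element $j$, and again $|F'\cap G'|=|F\cap G|\ge t$. The delicate case, which I expect to be the crux, is when exactly one set is moved, say $F$ is moved and $G=G'$ is not (the reverse case is symmetric). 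Here the only way the $\{i,j\}$-contribution can drop is the configuration $j\in G$, $i\notin G$: then $F$ contributed $j$ to $F\cap G$ but $F'$ contributes nothing from $\{i,j\}$, so the naive bound degrades to $|F'\cap G'|\ge t-1$.

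To close this gap I will exploit the very reason $G$ was \emph{not} moved. Since $j\in G$, $i\notin G$, and $G$ is fixed by $s_{i,j}$, the definition of the operator forces the shifted set $G_1:=(G\setminus\{j\})\cup\{i\}$ to already lie in $\mathcal{G}$. Applying the cross $t$-intersecting hypothesis to the original pair $F\in\mathcal{F}$ and $G_1\in\mathcal{G}$ gives $|F\cap G_1|\ge t$. Because $i\notin F$ and $j\notin G_1$, this pair has empty intersection inside $\{i,j\}$, so the full weight $t$ is already carried outside $\{i,j\}$; that is, $|(F\setminus\{i,j\})\cap(G\setminus\{i,j\})|\ge t$. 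Since $F'\cap G'$ agrees with this set off $\{i,j\}$, I conclude $|F'\cap G'|\ge t$, resolving the delicate case and completing the argument. The only real subtlety is the bookkeeping of which of $i,j$ belongs to each of $F,F',G,G'$ across the sub-cases; once the auxiliary set $G_1$ is produced, the desired inequality is immediate.
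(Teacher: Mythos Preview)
Your argument is correct and is exactly the standard proof that the shifting operator preserves cross $t$-intersection; the case split and the use of the auxiliary set $G_1=(G\setminus\{j\})\cup\{i\}\in\mathcal{G}$ in the asymmetric sub-case are both handled cleanly. Note, however, that the paper does not actually prove this lemma: it is quoted from \cite{G23} without proof, so there is no in-paper argument to compare against. What you have written is essentially the classical proof one finds in the literature for this fact, so there is no divergence in approach to discuss.
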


We now extend these ideas to separated family. Consider a ground set partitioned as $[m] = \bigcup_{i \in [k]} X_i$, where the $X_i$ are pairwise disjoint sets of size $n$, and let $v_i$ denote the smallest element in $X_i$. For every $\ell\in [k]$, $$\mathcal{H}(n,k,\ell)=\left\{H\in\binom {[m]}{\ell}:|H\cap X_i|\le 1,i\in [k]\right\}.$$ 
Let $\mathcal{F}\subseteq\mathcal{H}(n,k,\ell)$ be a separated family. The shifting operator $s_{ij}$ is allowed to apply on  $\mathcal{F}$ if $i<j$ are in the same $X_a$ for some $a\in [k]$.
A separated family $\mathcal{F}\subseteq\mathcal{H}(n,k,\ell)$  is called \textit{shifted} if $s_{ij}(\mathcal{F})=\mathcal{F}$ for all $i<j$ in the same $X_a$.
By applying the allowed shifting operator repeatedly, every 
separated family eventually becomes shifted.

For any $H\in \mathcal{H}(n,k,\ell)$ and $\mathcal{F}\subseteq \mathcal{H}(n,k,\ell)$,  we fix the following notation:
$$
A(H)=\left\{i:H\cap X_i=\{v_i\}\right\},\quad \mathcal{A}(\mathcal{F})=\left\{A(F):F\in\mathcal{F}\right\}.
$$
This notation allows us to associate each set with a subset of indices indicating in which blocks it contains exactly the minimal element. We now state a structural result that facilitates our analysis, which is inspired by the reduction lemma in \cite{F23, FLWY2024, L23}.

\begin{lemma}[Reduction lemma]\label{4}
Suppose that $\mathcal{F}\subseteq\mathcal{H}(n,k,\ell)$  and  $\mathcal{G}\subseteq\mathcal{H}(n,k,\ell')$  are shifted and  cross $t$-intersecting. Then $\mathcal{A}(\mathcal{F})\subseteq\binom{[k]}{\le \ell}$ and $\mathcal{A}(\mathcal{G})\subseteq \binom{[k]}{\le \ell'}$ are cross $t$-intersecting.
\end{lemma}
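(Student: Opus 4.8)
The plan is to show that for every $F\in\mathcal{F}$ and $G\in\mathcal{G}$ we have $|A(F)\cap A(G)|\ge t$. Fix such $F$ and $G$. The obvious difficulty is that the cross $t$-intersection of $\mathcal{F}$ and $\mathcal{G}$ only guarantees $|F\cap G|\ge t$, and the $t$ common elements need not be of the form $v_i$; they may be non-minimal elements of various blocks $X_i$. The key idea, exploiting shiftedness, is to replace $F$ and $G$ by two new sets $F^*$ and $G^*$ in $\mathcal{F}$ and $\mathcal{G}$ respectively whose intersection is witnessed entirely by minimal elements, i.e.\ $F^*\cap G^*\subseteq\{v_1,\dots,v_k\}$ and $|F^*\cap G^*|\ge t$; then $A(F^*)\cap A(G^*)=\{i: v_i\in F^*\cap G^*\}$ has size $\ge t$, and since $A(F^*)\subseteq A(F)$ is false in general we must be a bit more careful — so instead I will argue directly that the indices of $F^*\cap G^*$ lie in $A(F)\cap A(G)$.

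Here is the mechanism in more detail. Let $I\subseteq[k]$ be the set of blocks $a$ with $F\cap X_a\neq\emptyset$ and $G\cap X_a\neq\emptyset$; since $|F\cap G|\ge t$ and $|F\cap X_a|,|G\cap X_a|\le 1$, we have $|I|\ge t$ (each common element of $F$ and $G$ forces the corresponding block into $I$, and different common elements lie in different blocks). Now I claim $I\subseteq A(F)\cap A(G)$, which finishes the proof since then $|A(F)\cap A(G)|\ge|I|\ge t$. Suppose for contradiction that some $a\in I$ has, say, $F\cap X_a=\{x\}$ with $x\neq v_a$. Apply the shift $s_{v_a,x}$ to $\mathcal{F}$: because $\mathcal{F}$ is shifted, $s_{v_a,x}(\mathcal{F})=\mathcal{F}$, so in particular either $F$ already contains $v_a$ (impossible, as $|F\cap X_a|=1$ and that one element is $x\neq v_a$), or the set $F':=(F\setminus\{x\})\cup\{v_a\}$ already belongs to $\mathcal{F}$. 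In the latter case $F'\in\mathcal{F}$, and since $G\cap X_a=\{z\}$ for some $z$, we have $F'\cap G\supseteq(F\cap G)\setminus\{x\}$; if $x\notin F\cap G$ then $|F'\cap G|\ge|F\cap G|\ge t$ and we have lost nothing, but if $x\in F\cap G$ (so $z=x$) then we seem to lose one element. To handle this cleanly I will not peel off blocks one at a time but instead shift $F$ "all the way down": repeatedly applying allowed shifts $s_{v_a,x}$ within each block with $x\neq v_a$, and using $s_{ij}(\mathcal{F})=\mathcal{F}$, one obtains that the "fully minimized" set $\widehat F:=\{v_a: a\in I_F\}$ (where $I_F=\{a:F\cap X_a\neq\emptyset\}$) lies in $\mathcal{F}$; similarly $\widehat G:=\{v_b:b\in I_G\}\in\mathcal{G}$. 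Then $\widehat F\cap\widehat G=\{v_a:a\in I_F\cap I_G\}=\{v_a:a\in I\}$, and cross $t$-intersection of $\mathcal{F},\mathcal{G}$ gives $|\widehat F\cap\widehat G|\ge t$, hence $|I|\ge t$. Finally $A(\widehat F)=I_F\supseteq I$ — but I actually want this for the original $F$, so I note $A(F)=\{a\in I_F: F\cap X_a=\{v_a\}\}$ and run the argument contrapositively: if $a\in I$ but $a\notin A(F)$, I show a set of $\mathcal{F}$ of the form $(F\setminus\{x\})\cup\{v_a\}$ exists, and iterating reduces to the case $A(F)\supseteq I$, whence the conclusion.

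The step I expect to be the main obstacle is the bookkeeping in the iterated shifting: I must be sure that after replacing $x$ by $v_a$ the intersection with $G$ does not drop below $t$, and the clean way to see this is that the intersection pattern between $\widehat F$ and $\widehat G$ is governed entirely by the block-index sets $I_F,I_G$, which are unchanged by shifts within blocks. Once one commits to working with the minimized sets $\widehat F,\widehat G$ from the start, the argument becomes transparent: shiftedness forces $\widehat F\in\mathcal{F}$ and $\widehat G\in\mathcal{G}$, their intersection is exactly $\{v_a:a\in I_F\cap I_G\}$, cross $t$-intersection gives $|I_F\cap I_G|\ge t$, and since $A(F)=I_F$ and $A(G)=I_G$ for shifted families — indeed for a shifted separated family every set, and in particular $\widehat F$, witnesses $A(\widehat F)=I_F$, and one checks $A(F)\supseteq$ the block-indices is automatic once no non-minimal element survives — we get $|A(F)\cap A(G)|\ge t$, as required. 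I will also record the trivial containments $\mathcal{A}(\mathcal{F})\subseteq\binom{[k]}{\le\ell}$ and $\mathcal{A}(\mathcal{G})\subseteq\binom{[k]}{\le\ell'}$, which are immediate from $|A(F)|\le|F|=\ell$ and $|A(G)|\le|G|=\ell'$.
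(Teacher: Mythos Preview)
Your argument has a genuine gap. The ``fully minimize both'' approach correctly shows that $\widehat F:=\{v_a:a\in I_F\}\in\mathcal{F}$ and $\widehat G\in\mathcal{G}$ (by shiftedness), and hence that $|I_F\cap I_G|=|\widehat F\cap\widehat G|\ge t$. But what you must prove is $|A(F)\cap A(G)|\ge t$ for the \emph{original} $F$ and $G$, and $A(F)$ is the set of blocks where $F$ hits the minimal element, which in general is a proper subset of $I_F$. Your assertion that ``$A(F)=I_F$ for shifted families'' is simply false: take $n=k=\ell=\ell'=2$, $t=1$, $X_1=\{1,2\}$, $X_2=\{3,4\}$, and $\mathcal{F}=\mathcal{G}=\{\{1,3\},\{1,4\}\}$; this is shifted and cross-intersecting, yet for $F=\{1,4\}$ one has $I_F=\{1,2\}$ while $A(F)=\{1\}$. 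The ``iterating reduces to the case $A(F)\supseteq I$'' line does not help either: iterating the replacement $F\mapsto (F\setminus\{x\})\cup\{v_a\}$ changes the set, and the bound you eventually obtain is for $A(\widehat F)$, not for $A(F)$.

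The missing idea is to modify \emph{only one} of the two sets, and to do so \emph{asymmetrically} relative to $A(G)$: define $F'$ by keeping the $F$-element in every block $a\in A(G)$ and replacing the $F$-element by $v_a$ in every block $a\notin A(G)$. Then $F'\in\mathcal{F}$ by shiftedness, and one checks block by block that $F'\cap G=\{v_a:a\in A(F)\cap A(G)\}$: in blocks of $A(G)$ the set $G$ contributes $v_a$ while $F'$ contributes the original $F$-element, so they coincide iff $a\in A(F)$; in all other blocks $G$ contributes either nothing or a non-minimal element, while $F'$ contributes $v_a$ or nothing. Hence $|A(F)\cap A(G)|=|F'\cap G|\ge t$. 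This asymmetric shift is exactly the step your symmetric minimization cannot recover.
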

\begin{proof}
    Assume that there exist $F=\left\{x_{a_{1}},\ldots,x_{a_{\ell}} \right\}\in\mathcal{F}$ and $G= \left\{y_{b_{1}},\ldots,y_{b_{\ell'}}\right\}\in\mathcal{G}$ such that $\left|A(F)\cap A(G)\right|<t$, where $x_{a_{j}}\in X_{a_{j}}$  for $j\in[\ell]$ and $y_{b_{j}}\in X_{b_{j}}$  for $j\in[\ell']$. 
     Recall that $v_{a_{i}}$ is the smallest element of $X_{a_{i}}$.
    Let $F'=\left\{z_{a_{1}},\ldots,z_{a_{\ell}}\right\}$, where for $i\in[\ell]$, 
$$
z_{a_{i}}=
\begin{cases}
    x_{a_{i}} &\text{if} \ \ a_{i}\in A(G),\\
    v_{a_{i}} &\text{if} \ \ a_{i}\notin A(G).
\end{cases}
$$
Observe that $F'$ can be obtained from $F$ by allowed shifting operations. Since $\mathcal{F}$ is shifted, we have $F'\in\mathcal{F}$. It follows that
$$
\left|F'\cap G \right|= \left|A(F)\cap A(G) \right|<t,
$$
a contradiction.
\end{proof}

\subsection{Necessary intersection point}

We prove our main auxiliary result (Theorem \ref{n1}) in this part. Its proof utilizes the necessary intersection point method, a powerful technique recently developed by Gupta , Mogge, Piga and Schulke \cite{G23} for analyzing cross intersecting families. 
To apply this method effectively, we first introduce some necessary notions and preliminary results.

\begin{definition}
    Let $\mathcal{F}, \mathcal{G} \subseteq 2^{[k]}$ be cross $t$-intersecting families.  We say $a \in[n]$ is a \textit{necessary intersection point} of $\mathcal{F}$ and $ \mathcal{G}$ if there exist  $F \in \mathcal{F}$ and $ G  \in \mathcal{G} $ such that
    \begin{align*}
    \left|[a] \cap F\cap G\right|=t \text { and } a \in F\cap G.
    \end{align*}
\end{definition}

The above definition naturally leads to the following result.

 \begin{lemma}\label{34}
      Let $\mathcal{F}, \mathcal{G} \subseteq 2^{[k]}$ be cross $t$-intersecting families and $1\le i< j\le k$. Suppose that $a$ is the maximal necessary intersection point of $\mathcal{F}$ and $ \mathcal{G}$. Then 
      $
       \left|[a-1] \cap F\cap G\right|\ge t-1.
      $
 Moreover, $s_{ij}(\mathcal{F})$ and $ s_{ij}(\mathcal{G})$ are  cross $t$-intersecting with the maximal necessary intersection point at most $a$. 
 \end{lemma}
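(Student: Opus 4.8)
The plan is to treat the two assertions separately. \textbf{For the first assertion}, I would prove the slightly stronger statement that $|[a-1]\cap F\cap G|\ge t-1$ for \emph{every} $F\in\mathcal F$ and $G\in\mathcal G$. Suppose some pair violates this, so $|[a-1]\cap F\cap G|\le t-2$. Since $\mathcal F,\mathcal G$ are cross $t$-intersecting we have $|F\cap G|\ge t$; writing $F\cap G=\{c_1<c_2<\cdots\}$, the hypothesis says at most $t-2$ of these elements lie in $[a-1]$, hence $c_{t-1}\ge a$ and so $c_t\ge a+1$. But then $[c_t]\cap F\cap G=\{c_1,\dots,c_t\}$ has size exactly $t$ and $c_t\in F\cap G$, so $c_t$ is a necessary intersection point of $\mathcal F,\mathcal G$ larger than $a$, contradicting the maximality of $a$.

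\textbf{For the second assertion}, that $s_{ij}(\mathcal F)$ and $s_{ij}(\mathcal G)$ are cross $t$-intersecting is precisely Lemma~\ref{G23}, so only the bound on the necessary intersection point needs work. I would argue by contradiction: suppose $b>a$ is a necessary intersection point of $s_{ij}(\mathcal F)$ and $s_{ij}(\mathcal G)$, witnessed by $F'\in s_{ij}(\mathcal F)$ and $G'\in s_{ij}(\mathcal G)$, so $|[b]\cap F'\cap G'|=t$ and $b\in F'\cap G'$. Two observations are used repeatedly: (i) if a set $H$ is left fixed by $s_{ij}$ although $j\in H$ and $i\notin H$, then $(H\setminus\{j\})\cup\{i\}$ lies in the same family (otherwise $H$ would have been moved); and (ii) for the preimage $F=(F'\setminus\{i\})\cup\{j\}\in\mathcal F$ of a set that moved, the cross $t$-intersecting property of the \emph{original} families forces $|F\cap G|\ge t$ for every $G\in\mathcal G$, and symmetrically for $\mathcal G$.

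\textbf{The core is then a case analysis}, which I expect to be the only real obstacle. If $F'\in\mathcal F$ and $G'\in\mathcal G$, then $b$ is already a necessary intersection point of $\mathcal F,\mathcal G$ — contradiction. Otherwise some set moved; say $F'=s_{ij}(F)\ne F$ with $F\in\mathcal F$ (the case where only $G'$ moved is symmetric, and the case where both moved is absorbed into the last subcase below by replacing $G'$ with its preimage). I would split on how $G'$ meets $\{i,j\}$. If $i\notin G'$ and $j\notin G'$, then $F\cap G'=F'\cap G'$, so $(F,G')$ witnesses $b$ in the original families. If $j\in G'$ and $i\notin G'$, then by (i) the set $G'':=(G'\setminus\{j\})\cup\{i\}$ lies in $\mathcal G$, and a direct check gives $F\cap G''=F'\cap G'$, so $(F,G'')$ witnesses $b$. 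The delicate case is $i\in G'$: here deleting $i$ from the intersection can drop the truncated size $|[b]\cap F\cap G'|$ to $t-1$, so $b$ itself need not be witnessed by $(F,G')$; instead I would invoke (ii) to see that the relevant intersection in $\mathcal F\times\mathcal G$ properly contains its truncation to $[b]$, which produces a common element $f>b$ with truncated intersection exactly $t$ — a necessary intersection point of $\mathcal F,\mathcal G$ exceeding $a$, again a contradiction. The inequality $i<j$ is exactly what rules out the degenerate subcases (such as $b=i$). Since every branch contradicts the maximality of $a$, the maximal necessary intersection point of $s_{ij}(\mathcal F),s_{ij}(\mathcal G)$ is at most $a$. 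The bookkeeping in this last case — exactly how $F\cap G'$ compares with $F'\cap G'$ and where $b$, $i$, $j$ sit relative to each other — is the main thing to get right; the rest is routine.
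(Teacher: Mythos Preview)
Your plan is correct and will go through; the first assertion is handled exactly as in the paper, and for the second assertion your case analysis on how $G'$ meets $\{i,j\}$ does work once the bookkeeping is done (in particular, in the two sub-cases with $i\notin G'$ you are implicitly using that $G'$ could not have been moved, hence $G'\in\mathcal G$, which you should make explicit). The ``delicate'' case $i\in G'$ indeed splits further according to whether $G'\in\mathcal G$ or $G'$ is the image of a moved $G\in\mathcal G$, and in each branch either the pair in $\mathcal F\times\mathcal G$ witnesses $b$ directly, or its truncated intersection at $[b]$ has size $t-1$ and the $t$-th common element gives a necessary intersection point exceeding $a$. So there is no genuine gap.

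The paper's execution is shorter because it never works at level $[b]$ at all. Instead it uses the reformulation of the first assertion as ``$|[a]\cap F\cap G|\ge t$ for every $F\in\mathcal F$, $G\in\mathcal G$'', observes that $|[a]\cap s_{ij}(F)\cap s_{ij}(G)|<t$, and then argues that (after reducing to $s_{ij}(G)=G$) one must have $j\in F\cap G$, $i\notin G$, and $G\setminus\{j\}\cup\{i\}\in\mathcal G$; the contradiction then comes from the single identity
\[
[a]\cap F\cap\bigl(G\setminus\{j\}\cup\{i\}\bigr)=[a]\cap\bigl(F\setminus\{j\}\cup\{i\}\bigr)\cap G,
\]
whose left side has size $\ge t$ and whose right side has size $<t$. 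This bypasses the positional analysis of $b$ versus $i,j$ that you flagged as the main obstacle. Your approach buys directness and requires no clever identity; the paper's buys brevity.
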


\begin{proof}
  For the first assertion, assume that
      $
       \left|[a-1] \cap F\cap G\right|< t-1.
      $
  Then 
      $
        \left|[a] \cap F\cap G\right|\le \left|[a-1] \cap F\cap G\right|+1<t,
      $
which means there exists a necessary intersection point larger than $a$ and it is a contradiction.

For the second assertion, by Lemma \ref{G23}, it suffices to prove that 
 the maximal necessary intersection point of $s_{ij}(\mathcal{F})$ and $ s_{ij}(\mathcal{G})$  is  at most $a$.
Let $a'$ be their  maximal necessary intersection point.
Then there exist $F \in \mathcal{F}$ and $ G  \in \mathcal{G} $ such that
$ \left|[a'] \cap s_{ij}(F)\cap s_{ij}(G)\right|= t $ and $a'\in s_{ij}(F)\cap s_{ij}(G)$. 
If $a' >a$, then $\left|[a] \cap s_{ij}(F)\cap s_{ij}(G)\right|<t$. 
By the definition of $a$,  we have $s_{ij}(F)\notin\mathcal{F}$ or 
 $s_{ij}(G)\notin\mathcal{G}$.
By symmetry, we may assume that $s_{ij}(F)\notin\mathcal{F}$.
This implies that $ s_{ij}(F)=F\backslash\{j\}\cup \{i\}$.
From $\left|[a] \cap (F\backslash\{j\}\cup \{i\})\cap s_{ij}(G)\right|<t$
and $\left|[a] \cap F\cap G\right|\geq t$, we infer that 
$j\in F\cap G$ and $i\notin G$ and $G\backslash\{j\}\cup \{i\}\in \mathcal{G}$. Then $s_{ij}(G)=G$ and hence $\left|[a] \cap (F\backslash\{j\}\cup \{i\})\cap G\right|<t$. Furthermore, by the definition of $a$, we have
$$\left|[a]\cap F\cap (G\backslash\{j\}\cup \{i\}) \right|\geq t.$$
However, 
$|[a]\cap F\cap (G\backslash\{j\}\cup \{i\})|= \left|[a] \cap (F\backslash\{j\}\cup \{i\})\cap G\right|< t$, a contradiction. Therefore, we conclude that $a^{\prime}\leq a$.
\end{proof}

 Let $\mathcal{F}, \mathcal{G} \subseteq2^{[k]}$ be cross $t$-intersecting families with the maximal necessary intersection point $a$. 
Define
\begin{align*}
\mathcal{F}^a=&\left\{F\in \mathcal{F}:  \exists~ G  \in \mathcal{G} \text{ s.t. } \left|[a] \cap F\cap G\right|=t \text { and } a \in F\cap G\right\},\\
\mathcal{G}^a=&\left\{G\in \mathcal{G}:  \exists~ F  \in \mathcal{F} \text{ s.t. } \left|[a] \cap F\cap G\right|=t \text { and } a \in F\cap G\right\}.
\end{align*}
Additionally, let $\omega:[0, k] \rightarrow \mathbb{R}_{\geq0}$ be a \textit{non-increasing function}, meaning that $\omega(i) \geq \omega(j)$ whenever $i \leq j$. For any set $F \subseteq [n]$, define its weight as $\omega(F) = \omega(|F|)$. For a family $\mathcal{F} \subseteq 2^{[n]}$, define the total weight by $\omega(\mathcal{F}) = \sum_{F \in \mathcal{F}} \omega(F)$.
 
The following lemma, which is an analogue of Lemma 3.3 in \cite{G23}, plays a key role in our weighting argument.

\begin{lemma}\label{35}
     Let $k \geq \ell \geq \ell' \geq t\geq 1$  be integers. For $i \in[2]$, let $\omega_i:[0,k] \rightarrow \mathbb{R}_{\geq 0}$ be non-increasing functions.  Let $\mathcal{F}\subseteq\binom{[k]}{\leq \ell}$ and $\mathcal{G} \subseteq\binom{[k]}{\leq \ell'}$ be non-empty cross $t$-intersecting families with the maximal necessary intersection point $a$. Suppose that $a \geq t+1$ and $\mathcal{F} \backslash \mathcal{F}^a \neq \emptyset$ and $\mathcal{G} \backslash \mathcal{G}^a \neq \emptyset$. Then there are families $\mathcal{F}^*\subseteq\binom{[k]}{\leq \ell}$ and $\mathcal{G}^* \subseteq\binom{[k]}{\leq \ell'}$  such that
     \begin{itemize}
         \item [(1)] $\mathcal{F}^*, \mathcal{G}^*$ are non-empty cross $t$-intersecting families;
        \item[(2)]  $\omega_1(\mathcal{F}^*)+\omega_2(\mathcal{G}^*)\ge \omega_1(\mathcal{F})+\omega_2(\mathcal{G})$;
        \item[(3)] the maximal necessary intersection point of $\mathcal{F}^*$ and $\mathcal{G}^*$ is smaller than $a$.
         \item[(4)] there are shifted families $\mathcal{F}^*$ and $\mathcal{G}^*$ satisfying (1)-(3).
     \end{itemize}
\end{lemma}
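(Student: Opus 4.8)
The plan is to prove Lemma~\ref{35} by an averaging (or "partition-and-swap") argument over the coordinate $a$, essentially mimicking Lemma~3.3 of \cite{G23} but carrying along two weight functions $\omega_1,\omega_2$ instead of one. First I would fix the maximal necessary intersection point $a\ge t+1$ and split each family according to behavior at $a$: write $\mathcal{F}=\mathcal{F}_{\text{in}}\cup\mathcal{F}_{\text{out}}$ where $\mathcal{F}_{\text{in}}=\{F\in\mathcal{F}: a\in F\}$ and $\mathcal{F}_{\text{out}}=\{F\in\mathcal{F}: a\notin F\}$, and similarly for $\mathcal{G}$. For each $F\in\mathcal{F}_{\text{in}}$ consider its "sibling" $F\setminus\{a\}$, and define the swapped family $\mathcal{F}^{*}$ by deciding, for each such pair, whether to keep $F$ or replace it by $F\setminus\{a\}$ — and likewise for $\mathcal{G}$. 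The key point is that, because $a$ is the \emph{maximal} necessary intersection point, any pair $F\in\mathcal{F}$, $G\in\mathcal{G}$ that "uses" $a$ essentially must, by Lemma~\ref{34}, already satisfy $|[a-1]\cap F\cap G|\ge t-1$; so removing $a$ from one side can only fail cross $t$-intersection in a controlled way, and one can repair it by removing $a$ from the other side too. I would make the choice of which elements to delete uniformly/greedily so that cross $t$-intersection is preserved: concretely, delete $a$ from every member of $\mathcal{F}$ and from every member of $\mathcal{G}$ except when doing so destroys the cross $t$-intersection with some surviving partner, which by the maximality of $a$ forces that partner to also lie in the "must-keep-$a$" part, and one checks these must-keep parts are themselves cross $(t-1)$-intersecting on $[a-1]$, hence can both retain $a$ safely.

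The second step is the weight bookkeeping. Since $\omega_1,\omega_2$ are non-increasing, replacing $F$ by $F\setminus\{a\}$ never decreases $\omega_1(F)$; so as long as the construction only ever deletes elements (never adds), property~(2) $\omega_1(\mathcal{F}^*)+\omega_2(\mathcal{G}^*)\ge\omega_1(\mathcal{F})+\omega_2(\mathcal{G})$ is automatic. The nonemptiness in~(1) uses the hypotheses $\mathcal{F}\setminus\mathcal{F}^a\neq\emptyset$ and $\mathcal{G}\setminus\mathcal{G}^a\neq\emptyset$: those furnish sets that do not rely on $a$ as an intersection witness, so they survive (possibly with $a$ deleted) and keep both families non-empty. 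For property~(3), after the swap no pair $F\in\mathcal{F}^*$, $G\in\mathcal{G}^*$ can have $|[a]\cap F\cap G|=t$ with $a\in F\cap G$ \emph{and} $a$ still serving as a tight witness — the construction is designed precisely so that wherever $a$ is retained on both sides simultaneously, the intersection on $[a-1]$ already reaches $t$, pushing any necessary intersection point strictly below $a$; I'd verify this by a short case analysis on whether $a\in F$, $a\in G$. Finally, property~(4): apply the allowed shifting operators $s_{i,j}$ to $\mathcal{F}^*,\mathcal{G}^*$; by Lemma~\ref{G23} cross $t$-intersection is preserved, by the "Moreover" part of Lemma~\ref{34} the maximal necessary intersection point does not increase (so stays below $a$), nonemptiness is preserved, and since shifting only replaces a larger coordinate by a smaller one it does not decrease $\omega_1$ or $\omega_2$ (the weights depend only on cardinality, which is unchanged — so in fact the weights are exactly preserved under shifting). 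Thus the shifted versions still satisfy (1)--(3).

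I expect the main obstacle to be step one: pinning down the exact rule for which copies of $a$ to delete so that cross $t$-intersection is \emph{globally} preserved, not just pairwise. The subtlety is that deleting $a$ from $F$ might be fine against one partner $G$ but fatal against another $G'$; one has to argue that the set of "fatal" partners is confined to the sub-family that genuinely needs $a$ (i.e., related to $\mathcal{F}^a$, $\mathcal{G}^a$), that this sub-family can keep $a$ without interacting badly with the deleted copies, and that the maximality of $a$ together with Lemma~\ref{34} makes these sub-families compatible. Getting the definitions of the repaired families and the order of operations right — first deciding $\mathcal{G}^*$, then $\mathcal{F}^*$, or handling them symmetrically via a simultaneous rule — is where the care lies; the rest (weights, nonemptiness, shifting) is routine given the framework already set up in the excerpt.
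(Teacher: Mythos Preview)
Your plan has a genuine gap in step one, and it is not just a matter of pinning down the rule carefully: the ``replace $F$ by $F\setminus\{a\}$ or keep $F$'' scheme cannot simultaneously achieve (1) and (3) in general. Take any $F\in\mathcal{F}^a$ and a witnessing $G\in\mathcal{G}$ with $|[a]\cap F\cap G|=t$, $a\in F\cap G$, and suppose $|F\cap G|=t$ (so the entire intersection lies in $[a]$; this configuration is certainly allowed by the hypotheses). Then
\begin{itemize}
\item keeping $a$ in both leaves $|[a]\cap F\cap G|=t$ with $a\in F\cap G$, so $a$ is still a necessary intersection point and (3) fails;
\item deleting $a$ from $F$ only gives $|(F\setminus\{a\})\cap G|=t-1$, so (1) fails;
\item deleting $a$ from $G$ only is symmetric;
\item deleting $a$ from both gives $|(F\setminus\{a\})\cap(G\setminus\{a\})|=t-1$, so (1) fails.
\end{itemize}
No choice of ``keep or replace'' on the two sides works. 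Your write-up reflects this tension: you first say the must-keep parts ``can both retain $a$ safely'' because they are cross $(t-1)$-intersecting on $[a-1]$, and then for (3) you assert that wherever $a$ is retained on both sides ``the intersection on $[a-1]$ already reaches $t$''. These two claims are incompatible on exactly the pairs above.

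What the paper does instead is \emph{asymmetric} and involves both adding and removing whole sets, not just deleting the coordinate $a$. One sets $\mathcal{F}^{+}=\mathcal{F}\cup\{F\setminus\{a\}:F\in\mathcal{F}^a\}$ (so the siblings are \emph{added}, with $F$ kept as well) and $\mathcal{G}^{-}=\mathcal{G}\setminus\mathcal{G}^a$ (so the witness sets are \emph{discarded}), and shows $|[a-1]\cap F^{+}\cap G^{-}|\ge t$ for every pair, giving (1) and (3) at once. The weight bookkeeping is then not automatic: the loss $\omega_2(\mathcal{G}^a)$ must be offset by the gain $\omega_1(\mathcal{F}^{\mathrm{add}})\ge\omega_1(\mathcal{F}^a)$, which works precisely when $\omega_1(\mathcal{F}^a)\ge\omega_2(\mathcal{G}^a)$; otherwise one uses $(\mathcal{F}^{-},\mathcal{G}^{+})$. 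This comparison step is the missing idea in your outline. Your treatment of (4) via shifting and Lemmas~\ref{G23}, \ref{34} is fine and matches the paper.
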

\begin{proof}
Observe that (1) and (2) are preserved under shifting. According to Lemma \ref{34}, 
(3) is also preserved under shifting.
 Since one can apply the shifting operator to make $\mathcal{F}^*\subseteq\binom{[k]}{\leq \ell}$ and $\mathcal{G}^* \subseteq\binom{[k]}{\leq \ell'}$ shifted families, it suffices to establish properties (1)-(3).

Define
    $
    \mathcal{F}^{add}=\left\{F\backslash\{a\} : F\in \mathcal{F}^a   \right\},\ \mathcal{G}^{add}=\left\{G\backslash\{a\} : G\in \mathcal{G}^a   \right\}.
    $
Furthermore, let 
$$\mathcal{F}^+=\mathcal{F}\cup\mathcal{F}^{add},\quad \mathcal{G}^+=\mathcal{G}\cup\mathcal{G}^{add},\quad \mathcal{F}^-=\mathcal{F}\backslash\mathcal{F}^{a},\quad \mathcal{G}^-=\mathcal{G}\backslash\mathcal{G}^{a}.$$
Then  $\mathcal{F}^+, \mathcal{F}^-\subseteq \binom{[k]}{\leq \ell}$ and 
$\mathcal{G}^+, \mathcal{G}^-\subseteq \binom{[k]}{\leq \ell'}$.
\begin{claim}\label{cl1}
$\mathcal{F}\cap\mathcal{F}^{add}=\emptyset$ and $\mathcal{G}\cap\mathcal{G}^{add}=\emptyset$.
    \end{claim}
    \begin{proof}[Proof of Claim \ref{cl1}]
By symmetry, it suffices to prove that $\mathcal{F}\cap\mathcal{F}^{add}=\emptyset$.
        Assume, for the sake of contradiction, that  there exists $F\in\mathcal{F}\cap\mathcal{F}^{add}$.
     Since $F\in\mathcal{F}^{add}$, there is $F^*\in\mathcal{F}^a$
such that $F=F^*\backslash \{a\}$. By the definition of 
$\mathcal{F}^a$, there exists  $G\in \mathcal{G}$
 such that 
        $|[a]\cap F^*\cap G|=t$
        and $a\in F^*\cap G$.
This implies that $|[a-1]\cap F^*\cap G|=t-1$ and hence
        $
        |[a-1]\cap F\cap G|=t-1.
        $
Since $a\notin F$,  we obtain $|[a]\cap F\cap G|=t-1$, which contradicts with the maximality of $a$.
    \end{proof}

Since $ \omega_1, \omega_2$ are non-increasing, we obtain $\omega_1(\mathcal{F}^{add})\geq \omega_1(\mathcal{F}^a)$ and $\omega_2(\mathcal{G}^{add})\geq \omega_2(\mathcal{G}^a)$.
If $\omega_1(\mathcal{F}^a)\geq \omega_2(\mathcal{G}^a)$, then let
 $\mathcal{F}^*= \mathcal{F}^+$ and $\mathcal{G}^*=\mathcal{G}^-$.
It follows from Claim \ref{cl1} that
 \begin{align*}
       \omega_1(\mathcal{F}^*)+\omega_2(\mathcal{G}^*)&= \omega_1(\mathcal{F})+\omega_2(\mathcal{G})+\omega_1(\mathcal{F}^{add})-\omega_2(\mathcal{G}^a)\\
       &\geq\omega_1(\mathcal{F})+\omega_2(\mathcal{G})+\omega_1(\mathcal{F}^{a})-\omega_2(\mathcal{G}^a)\\
&\geq \omega_1(\mathcal{F})+\omega_2(\mathcal{G}).
        \end{align*}
If $\omega_1(\mathcal{F}^a)\leq \omega_2(\mathcal{G}^a)$, then let
 $\mathcal{F}^*= \mathcal{F}^-$ and $\mathcal{G}^*=\mathcal{G}^+$.
Similarly, we have $\omega_1(\mathcal{F}^*)+\omega_2(\mathcal{G}^*)\geq \omega_1(\mathcal{F})+\omega_2(\mathcal{G})$. Thus (2) holds.

It remains  to prove the following claim. 
 \begin{claim}\label{cl2}
  For any $F^+\in\mathcal{F}^+$ and $G^-\in\mathcal{G}^-$, 
     $
    |[a-1] \cap F^+\cap G^-|\geq t
     $
     holds.
Moreover, for any $F^-\in\mathcal{F}^-$ and $G^+\in\mathcal{G}^+$, 
     $
    |[a-1] \cap F^-\cap G^+|\geq t
     $
     holds.
    \end{claim}
    \begin{proof}[Proof of Claim \ref{cl2}]
By symmetry, it suffices to prove the first assertion.
    Suppose that there exist $F^+\in\mathcal{F}^+$ and $G^-\in\mathcal{G}^-$ such that
     $
    |[a-1] \cap F^+\cap G^-|< t.
     $

  If $F^+\in\mathcal{F}$,  then $G^-\in\mathcal{G}$ and
Lemma \ref{34} imply $
    |[a-1] \cap F^+\cap G^-|=t-1.
     $
By the maximality of $a$, we have $
    |[a] \cap  F^+\cap G^-|=t
     $
and $a\in F^+\cap G^-$.
Then $ G^-\in \mathcal{G}^a$,  a contradiction.

If $F^+\notin\mathcal{F}$, then $F^+\in\mathcal{F}^{add}$.  Since  $F^+\in\mathcal{F}^{add}$, there exists 
 $F\in\mathcal{F}^a$ such that
        $ F^+=F\backslash \{a\}$.
In view of    $
          |[a-1] \cap  F^+\cap G^-|< t,
        $
it follows that
      $
        \left|[a-1]\cap F\cap G^- \right|<t.
       $
Then $G^-\in\mathcal{G}$ and
Lemma \ref{34} imply 
$
        \left|[a-1]\cap F\cap G^- \right|=t-1.
       $
By the maximality of $a$, we have $
    |[a] \cap F\cap G^-|=t
     $
and $a\in  F\cap G^-$,
       which contradicts with  $ G^-\in \mathcal{G}\backslash\mathcal{G}^a$.
    \end{proof}

Since  Claim \ref{cl2} implies that (1) and (3) hold, the proof of the lemma is finished. 
\end{proof}

\begin{theorem}\label{n1}
Let $\mathcal{F}\subseteq\binom{[k]}{\leq \ell}$ and $\mathcal{G} \subseteq\binom{[k]}{\leq \ell'}$ be non-empty cross $t$-intersecting families with $k \geq \ell \geq \ell' \geq t\geq 1$.  For $i \in[2]$, let $\omega_i:[0,k] \rightarrow \mathbb{R}_{\geq 0}$ be non-increasing functions.   Then
\begin{align*}
\omega_1(\mathcal{F})+\omega_2(\mathcal{G}) \leq \max &\left\{\max _{a\in[t, \ell']}\{ \omega_1(\mathcal{K}_k(a, \ell, t))+\omega_2(\mathcal{S}_k(a, \ell'))\},\right.\\
&\left. \max _{a\in [t,\ell]}\{ \omega_1( \mathcal{S}_k(a,\ell))+\omega_2(\mathcal{K}_k(a, \ell', t))\}\right\},
\end{align*}
where $ \mathcal{K}_k(a, \ell, t) =\left\{K \in\binom{[k]}{\leq \ell}:|K \cap[a]| \geq t\right\}$ and $ \mathcal{S}_k(a,\ell')  =\left\{S \in\binom{[k]}{\leq \ell'}:[a] \subseteq S\right\}.$
\end{theorem}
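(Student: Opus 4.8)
The plan is to induct on the maximal necessary intersection point $a$ of the pair $(\mathcal{F},\mathcal{G})$, using Lemma \ref{35} as the inductive step and reducing to a clean base case when $a$ is small or when one of the families is ``trivial'' in the sense that $\mathcal{F}=\mathcal{F}^a$ or $\mathcal{G}=\mathcal{G}^a$. First I would dispose of the degenerate situations. If $\mathcal{F}$ contains the empty set (or more generally a set of size $<t$), cross $t$-intersection forces $\mathcal{G}=\emptyset$, contradicting non-emptiness; so every member of $\mathcal{F}$ and $\mathcal{G}$ has size $\ge t$, a necessary intersection point exists, and $a\ge t$ is well-defined. Next, by Lemma \ref{34} we may pass to shifted $\mathcal{F}$ and $\mathcal{G}$ without decreasing $\omega_1(\mathcal{F})+\omega_2(\mathcal{G})$ and without increasing $a$ (shifting only lowers $a$, and the target families $\binom{[k]}{\le\ell}$, $\binom{[k]}{\le\ell'}$ are themselves shifted, so the claimed bound is unaffected). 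So we may assume both families are shifted throughout.

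The inductive step runs as follows. Suppose $a\ge t+1$. If $\mathcal{F}\setminus\mathcal{F}^a\neq\emptyset$ and $\mathcal{G}\setminus\mathcal{G}^a\neq\emptyset$, apply Lemma \ref{35} to obtain cross $t$-intersecting, non-empty, shifted $\mathcal{F}^*,\mathcal{G}^*$ with $\omega_1(\mathcal{F}^*)+\omega_2(\mathcal{G}^*)\ge \omega_1(\mathcal{F})+\omega_2(\mathcal{G})$ and strictly smaller maximal necessary intersection point; then invoke the induction hypothesis on $(\mathcal{F}^*,\mathcal{G}^*)$. The remaining case is $\mathcal{F}=\mathcal{F}^a$ (the case $\mathcal{G}=\mathcal{G}^a$ being symmetric). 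Here every $F\in\mathcal{F}$ satisfies $|[a]\cap F\cap G|=t$ and $a\in F\cap G$ for some $G\in\mathcal{G}$; in particular $a\in F$ and $|[a]\cap F|\ge t$, so $\mathcal{F}\subseteq\{F\in\binom{[k]}{\le\ell}:|F\cap[a]|\ge t\}=\mathcal{K}_k(a,\ell,t)$. To bound $\mathcal{G}$ in this case, use shiftedness: if $G\in\mathcal{G}$ did not contain $[a]$, shift the (at most $a$) largest elements of $G$ down into $[a]$ — since $|G|\ge t$ and $\mathcal{G}$ is shifted one gets a set $G'\in\mathcal{G}$ with $[a']\subseteq G'$ for $a'=\min(|G|,a)$; combined with $\mathcal{F}=\mathcal{F}^a$ one checks every $G\in\mathcal{G}$ must in fact contain $[a]$, giving $\mathcal{G}\subseteq\mathcal{S}_k(a,\ell')$ and $a\le\ell'$. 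Monotonicity of $\omega_1,\omega_2$ then yields $\omega_1(\mathcal{F})+\omega_2(\mathcal{G})\le\omega_1(\mathcal{K}_k(a,\ell,t))+\omega_2(\mathcal{S}_k(a,\ell'))$, which is one of the terms in the stated maximum (with $a\in[t,\ell']$). The base case $a=t$ is handled the same way: $a=t$ forces, for the $F,G$ realizing the intersection point, $[t]\subseteq F\cap G$, and shiftedness propagates $[t]\subseteq G$ to all of $\mathcal{G}$ (resp.\ $|F\cap[t]|\ge t$, i.e.\ $[t]\subseteq F$, to all of $\mathcal{F}$), landing in one of the two extremal configurations with $a=t$.

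The main obstacle I anticipate is the case analysis in the ``stuck'' step $\mathcal{F}=\mathcal{F}^a$ (or $\mathcal{G}=\mathcal{G}^a$): one must argue carefully that shiftedness forces the passive family all the way up to the star $\mathcal{S}_k(a,\cdot)$ and simultaneously that $a$ does not exceed $\ell'$ (resp.\ $\ell$), since otherwise $\mathcal{S}_k(a,\ell')$ would be empty and the bound vacuous — but then one has to rule that out rather than merely quote it. A secondary technical point is verifying that the weight inequalities survive the reduction exactly (Lemma \ref{35}(2) gives $\ge$, which is the right direction), and that the induction is genuinely well-founded because $a$ is a non-negative integer strictly decreasing at each step and bounded below by $t$. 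None of these is deep, but the bookkeeping — especially tracking which of the two symmetric extremal families one lands in, and ensuring the index ranges $[t,\ell']$ and $[t,\ell]$ come out correctly — is where the real work lies.
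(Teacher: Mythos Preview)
Your overall strategy—reduce the maximal necessary intersection point via Lemma \ref{35} and handle the terminal cases $\mathcal{F}=\mathcal{F}^a$ or $\mathcal{G}=\mathcal{G}^a$—matches the paper's. But your analysis of the terminal case has the roles of the two extremal families reversed, and the argument you sketch for the passive family is actually false. Concretely: from $\mathcal{F}=\mathcal{F}^a$ you correctly deduce $a\in F$ for every $F\in\mathcal{F}$. What you should then do is apply shiftedness \emph{to $\mathcal{F}$}: if some $b\in[a]$ were missing from $F$, then $s_{b,a}(F)=(F\setminus\{a\})\cup\{b\}\in\mathcal{F}$, yet this set misses $a$, contradicting $\mathcal{F}=\mathcal{F}^a$. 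Hence $[a]\subseteq F$ for every $F\in\mathcal{F}$, so $\mathcal{F}\subseteq\mathcal{S}_k(a,\ell)$ (and $a\le\ell$); cross $t$-intersection then forces $\mathcal{G}\subseteq\mathcal{K}_k(a,\ell',t)$, landing you in the \emph{second} branch of the maximum with $a\in[t,\ell]$.

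Your claimed alternative, that $\mathcal{G}\subseteq\mathcal{S}_k(a,\ell')$, does not follow from $\mathcal{F}=\mathcal{F}^a$ and shiftedness of $\mathcal{G}$. A counterexample: take $k=\ell=2$, $\ell'=t=1$, $\mathcal{F}=\{\{1,2\}\}$, $\mathcal{G}=\{\{1\},\{2\}\}$. Both are shifted and cross $1$-intersecting, the maximal necessary intersection point is $a=2$ (witnessed by $F=\{1,2\}$, $G=\{2\}$), and $\mathcal{F}^2=\mathcal{F}$; yet $\{1\}\in\mathcal{G}$ does not contain $[2]$, and indeed $\mathcal{S}_2(2,1)=\emptyset$. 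So the ``one checks'' step in your sketch cannot be completed, and your asserted range $a\le\ell'$ is also wrong in this branch. The fix is simply to exchange which family you push into the star and which into the kernel, exactly as the paper does; once corrected, your inductive framing is equivalent to the paper's extremal-choice argument.
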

\begin{proof}
 Let $\mathcal{F}\subseteq\binom{[k]}{\leq \ell}$ and $\mathcal{G} \subseteq\binom{[k]}{\leq \ell'}$ be  non-empty cross $t$-intersecting families with $\omega_1(\mathcal{F})+\omega_2(\mathcal{G})$  maximal and the maximal necessary intersection point minimal. 
  By Lemmas \ref{G23} and \ref{34}, we may assume that $\mathcal{F}$ and $\mathcal{G}$ are shifted. Let $a$ be  their  maximal necessary intersection point.

 If $a=t$,  then  for any $F\in\mathcal{F}$ and $G\in\mathcal{G}$, we have
  $|[t]\cap F\cap G|\ge t$, which means that $[t]\subseteq  F\cap G$. 
 Therefore, we have $\mathcal{F}=\mathcal{S}_k(a,\ell)$ and  $\mathcal{G}=\mathcal{S}_k(a,\ell')$, as required.

If $a\ge t+1$ and  $\mathcal{F} \backslash \mathcal{F}^a \neq \emptyset$ and $\mathcal{G} \backslash \mathcal{G}^a \neq \emptyset$, then Lemma \ref{35} implies that there exist  shifted and non-empty cross $t$-intersecting families
  $\mathcal{F}^*\subseteq\binom{[k]}{\leq \ell}$ and $\mathcal{G}^* \subseteq\binom{[k]}{\leq \ell'}$ such that  $\omega_1(\mathcal{F}^*)+\omega_2(\mathcal{G}^*)\ge \omega_1(\mathcal{F})+\omega_2(\mathcal{G})$. In addition, their maximal necessary intersection point  is smaller than $a$. This contradicts the minimality of $a$.
Therefore, either  $\mathcal{F}= \mathcal{F}^a $ or $\mathcal{G} = \mathcal{G}^a $. For the former case,  we have that all sets in $\mathcal{F}$ contain $a$. We claim that all sets in $\mathcal{F}$ contain $[a]$. Indeed, $F\in\mathcal{F}$ and $b\in[a]$ such that $b\notin F$ would imply $F\setminus \{a\}\cup\{b\}\in \mathcal{F}$ because $\mathcal{F}$ is shifted, a contradiction.
Therefore, we have $\mathcal{F}=\mathcal{S}_k(a,\ell)$ and hence $\mathcal{G}=\mathcal{K}_k(a, \ell', t)$.
For the latter case,  a similar argument yields  
$\mathcal{F}=\mathcal{K}_k(a, \ell, t)$ and $\mathcal{G}=\mathcal{S}_k(a,\ell')$. This completes the proof.
\end{proof}

\subsection{Proof of Theorem \ref{t3}}
Having established Lemma \ref{4} and Theorem \ref{n1}, we now return  to the proof of Theorem \ref{t3}.

 \begin{proof}[Proof of Theorem \ref{t3}]
     Let $\mathcal{F}\subseteq\mathcal{H}(n,k,\ell)$  and  $\mathcal{G}\subseteq\mathcal{H}(n,k,\ell')$ be non-empty cross $t$-intersecting families with $|\mathcal{F}|+|\mathcal{G}| $ maximal. 
     By Lemma \ref{G23}, we may assume that they are shifted 
separated families. 
According to Lemma \ref{4}, $\mathcal{A}(\mathcal{F})\subseteq \binom{[k]}{\le \ell}$ and $\mathcal{A}(\mathcal{G})\subseteq \binom{[k]}{\le \ell'}$ are cross $t$-intersecting. 

Observe that
     \begin{align*}
     |\mathcal{F}|\leq\sum\limits_{j\in[0,\ell]} 
     \left|\mathcal{A}(\mathcal{F})^{(j)}\right|\binom{k-j}{\ell-j}(n-1)^{\ell-j},\quad
|\mathcal{G}|\leq\sum\limits_{j\in[0,\ell']} 
     \left|\mathcal{A}(\mathcal{G})^{(j)}\right|\binom{k-j}{\ell'-j}(n-1)^{\ell'-j}. 
     \end{align*}
Let us define
     $$
     \omega_1{(j)}=\binom{k-j}{\ell-j}(n-1)^{\ell-j},\quad  \omega_2{(j)}=\binom{k-j}{\ell'-j}(n-1)^{\ell'-j},\quad j\in [0,k].
     $$
Since $n\geq 2$ and $k\geq \ell\geq \ell'$, we infer that $ \omega_1$ and $ \omega_2$
are non-increasing functions. 

Recall that  $ \mathcal{K}_k(a, \ell, t) =\left\{K \in\binom{[k]}{\leq \ell}:|K \cap[a]| \geq t\right\}$ and $ \mathcal{S}_k(a,\ell')  =\left\{S \in\binom{[k]}{\leq \ell'}:[a] \subseteq S\right\}.$
It follows from Theorem \ref{n1} that
     \begin{align*}
        |\mathcal{F}|+ |\mathcal{G}|&\le  \sum\limits_{j\in[0,\ell]} 
     \left|\mathcal{A}(\mathcal{F})^{(j)}\right|\binom{k-j}{\ell-j}(n-1)^{\ell-j}+\sum\limits_{j\in[0,\ell']} 
     \left|\mathcal{A}(\mathcal{G})^{(j)}\right|\binom{k-j}{\ell'-j}(n-1)^{\ell'-j}\\
&= \sum\limits_{j\in[0,\ell]}|\mathcal{A}(\mathcal{F})^{(j)}| \omega_1(j)+\sum\limits_{j\in[0,\ell']}|\mathcal{A}(\mathcal{G})^{(j)}| \omega_2(j)\\
&=\omega_1(\mathcal{A}(\mathcal{F}))+\omega_2(\mathcal{A}(\mathcal{G}))\\
       &\le\max\left\{\max _{a\in[t, \ell']}\{ \omega_1(\mathcal{K}_k(a, \ell, t))+\omega_2(\mathcal{S}_k(a, \ell'))\},\ \max _{a\in [t,\ell]}\{ \omega_1( \mathcal{S}_k(a,\ell))+\omega_2(\mathcal{K}_k(a, \ell', t))\}\right\}.
     \end{align*}
Let $f(a)=\omega_1(\mathcal{K}_k(a, \ell, t))+\omega_2(\mathcal{S}_k(a, \ell'))$ and 
$g(a)=\omega_1( \mathcal{S}_k(a,\ell))+\omega_2(\mathcal{K}_k(a, \ell', t))$.
Since
 \begin{align*}
\mathcal{K}_k(a+1, \ell, t)\backslash \mathcal{K}_k(a, \ell, t)&=\left\{K \in\binom{[k]}{\leq \ell}:|K \cap[a]|=t-1\text{ and } a+1\in K\right\},\\
\mathcal{S}_k(a, \ell')\backslash\mathcal{S}_k(a+1, \ell')&=\left\{K \in\binom{[k]}{\leq \ell'}: [a]\subseteq K \text{ and } a+1\notin K\right\},
 \end{align*}
it  follows that for any $a\in [t,\ell'-1]$, we have
  \begin{align*}
f(a+1)-f(a)=&\sum\limits_{j\in[0,\ell-t]} 
     \binom{a}{t-1}\binom{k-a-1}{j}\omega_1{(j+t)}-\sum\limits_{j\in[0,\ell'-a]} 
     \binom{k-a-1}{j}\omega_2{(j+a)}.
  \end{align*}
From $\ell\geq \ell'$ and $a\geq t$, we get $\omega_1{(j+t)}\geq \omega_2{(j+t)}\geq \omega_2{(j+a)}$ and $\ell-t\geq \ell'-a$.
This implies that
$f(a+1)-f(a)\geq 0$ and hence $f(\ell')=\max\limits _{a\in[t, \ell']} f(a)$.

 For any $a\in [t,\ell']$, we have $f(a)\geq g(a)$ because $
\mathcal{K}_k(a, \ell', t)\backslash \mathcal{S}_k(a, \ell')\subseteq \mathcal{K}_k(a, \ell, t)\backslash \mathcal{S}_k(a, \ell)
$.
This, together with $ f(\ell')=\max\limits _{a\in[t, \ell']} f(a)$, implies that
 \begin{align*}
        |\mathcal{F}|+ |\mathcal{G}|\le\left\{f(\ell'),\ \max _{a\in [\ell'+1,\ell]}g(a)\right\}= \max \left\{|\mathcal{F}_0|+|\mathcal{G}_0|,\ \max_{a\in[\ell'+1,\ell]}\left\{|\mathcal{F}_a|+|\mathcal{G}_a|\right\}\right\},
     \end{align*}
as required.
 \end{proof}



\section*{Acknowledgement}
The authors express their sincere thanks to the referee for the valuable suggestions which greatly improved the presentation of the manuscript. The corresponding author Lihua Feng was supported by the NSFC (Nos. 12271527 and 12471022) and NSF of Qinghai Province (No. 2025-ZJ-902T), Yongtao Li as the  corresponding author is a postdoctor at Tsinghua University.

\end{document}